\theoremstyle{plain}
\newtheorem{thmspecial}{Theorem}
\newtheorem{corspecial}[thmspecial]{Corollary}
\newtheorem{thm}{Theorem}[section]
\newtheorem{prop}[thm]{Proposition}
\newtheorem{lem}[thm]{Lemma}
\theoremstyle{definition}
\newtheorem{defn}{Definition}
\newtheorem{notation}{Notation}
\theoremstyle{remark}
\newtheorem{remark}[thm]{Remark}
\newcommand{\FF}{\mathbb{F}}
\newcommand{\KK}{\mathbb{K}}
\DeclareMathOperator{\Hamm}{Hamm}
\DeclareMathOperator{\Sym}{Sym}
\DeclareMathOperator{\cyc}{cyc}
\DeclareMathOperator{\id}{Id}
\DeclareMathOperator{\Aut}{Aut}
\DeclareMathOperator{\rank}{Rank}
\DeclareMathOperator{\tr}{tr}
\DeclareMathOperator{\supp}{Supp}\newcommand{\Supp}{\supp}
\DeclareMathOperator{\proj}{proj}
\DeclareMathOperator{\rea}{re}
\newcommand{\modulus}[1]{\left|#1\right|}
\newcommand{\calC}{\mathcal{C}}
\newcommand{\dhs}{d_\mathrm{HS}}
\newcommand{\lhs}{\ell_\mathrm{HS}}
\newcommand{\calU}{\mathcal{U}}
\newcommand{\dbarhs}{\overline{d}_{\mathrm{HS}}}
\newcommand{\lbarhs}{\overline{\ell}_{\mathrm{HS}}}
\newcommand{\norm}[1]{\left\|#1\right\|_2}
\newcommand{\calH}{\mathcal{H}}
\newcommand{\drk}{d_{\mathrm{rk}}}
\newcommand{\lrk}{\ell_{\mathrm{rk}}}
\newcommand{\GL}{\operatorname{GL}}
\newcommand{\PGL}{\operatorname{PGL}}
\newcommand{\drkbar}{\overline{d}_{\mathrm{rk}}}
\newcommand{\lrkbar}{\overline{\ell}_{\mathrm{rk}}}
\newcommand{\m}{^{-1}}
\newcommand{\Z}{\mathbb{Z}}
\newcommand{\N}{\mathbb{N}}
\newcommand{\C}{\mathbb{C}}
\begin{document}
\title{Metric Approximations of Wreath Products}
\author{Ben Hayes and Andrew W. Sale}
\thanks{The first-named author gratefully acknowledges support by NSF Grant DMS-1600802.\\}

\begin{abstract} 
	 Given the large class of groups already known to be sofic, there is seemingly a shortfall in results concerning their permanence properties.
	We address this problem for wreath products, and in particular investigate the behaviour of more general metric approximations of groups under wreath products. 
	
	Our main result is the following. Suppose that $H$ is a sofic group and $G$ is a countable, discrete group. If $G$ is sofic, hyperlinear, weakly sofic, or linear sofic, then $G\wr H$ is also sofic, hyperlinear, weakly sofic, or linear sofic respectively. 
	In each case we construct relevant metric approximations, extending a general construction of metric approximations for $G\wr H$ that uses soficity of $H$. \end{abstract}

\maketitle

\section{Introduction}

Sofic groups, introduced by Gromov \cite{GromovSofic} and developed by Weiss \cite{WeissSofic}, are a large class of groups that can be approximated, in some sense, by finite groups. 
We consider sofic groups, as well as several other classes of groups which can be similarly defined by metric approximations, namely weakly sofic groups (introduced by Glebsky and Rivera \cite{Glebsky-Rivera}), linear sofic groups (introduced by Arzhantseva and Paunescu \cite{LinearSofic}) and hyperlinear groups (implicitly defined by Connes and explicitly by R\u{a}dulescu \cite{Connes,Radul}).

Via their approximations, sofic, hyperlinear, linear sofic, and weakly sofic groups have applications to a wide area of fields. 
For example, sofic groups are relevant to ergodic theory \cite{Bow,KLi}, 
topological dynamics, in particular Gottschalk's surjunctivity conjecture \cite{GromovSofic,KLi},
group rings and Kaplansky's direct finiteness conjecture \cite{ESZKaplansky} (also for linear sofic groups \cite[Prop 2.6]{LinearSofic}),
and $L^2$--invariants %, for example the determinant conjecture and defining $L^2$--torsion 
\cite{ElekSzaboDeterminant,Luck}.
Hyperlinear groups are of interest in operator algebras, particularly the Connes embedding theorem \cite{Connes}, 
and in group theory, particularly for the Kevare conjecture \cite[Cor 10.4]{Pestov}.
We refer the reader to  \cite{Pestov,CapLup} for surveys on sofic and hyperlinear groups.

There are many examples of sofic groups, including all amenable groups, all residually finite groups, and all linear groups (by Malcev's Theorem).
However, because of the weakness of the approximation by finite groups, few permanence properties of soficity are properly understood. 
Relatively straightforward examples include closure under direct product and increasing unions, and the soficity of residually sofic groups.
More substantial results generally require some amenability assumption.
For example, an amalgamated product of two sofic groups is know to be sofic if the amalgamated subgroup is amenable (see \cite{ESZ2,LPaun,DKP,PoppArg}). 
This was extended to encompass the fundamental groups of all graphs of groups with sofic vertex groups and amenable edge groups \cite{CHR}.
In the same paper, it is shown that the graph product of sofic groups is sofic.
Also, if $H$ is a subgroup of $G$ that is sofic and coamenable, then $G$ is sofic too \cite{ESZ1}.

 Our first result is a new permanence property for soficity that concerns wreath products.
 Recall that the wreath product of two groups $G$ and $H$ is the semidirect product $\bigoplus_H G \rtimes H$.
\begin{thmspecial}\label{thmspecial:sofic}
	Let $G,H$ be countable, discrete, sofic groups.
		Then $G\wr H$ is sofic.
\end{thmspecial}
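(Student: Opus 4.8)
The plan is to produce, for each finite $F\subseteq G\wr H$ and each $\epsilon>0$, a map into a finite symmetric group that is an almost homomorphism on $F$ and moves every non-identity element of $F$ by almost the whole space in the normalized Hamming metric $d_{\Hamm}$. Fix sofic approximations $\psi_k\colon H\to\Sym(m_k)$ and $\phi_k\colon G\to\Sym(n_k)$; I will use that for $h\neq e$ in any fixed finite set $\psi_k(h)$ is asymptotically fixed-point free, i.e.\ $d_{\Hamm}(\psi_k(h),\id)\to 1$, together with the analogous statement for $\phi_k$ and the approximate multiplicativity of both families. The key device is to realise $G\wr H$ not on the imprimitive product $[n_k]\times[m_k]$ (where a single lamp generator $\delta_e^g$ can be made to move only a vanishing fraction of points, so separation degenerates) but on the finite \emph{cursor plus lamp-configuration} space $\Omega_k=[m_k]\times[n_k]^{[m_k]}$, whose points are pairs $(i,c)$ consisting of a cursor $i\in[m_k]$ and a configuration $c\colon[m_k]\to[n_k]$.

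Next I would define $\alpha_k\colon G\wr H\to\Sym(\Omega_k)$ by letting the top group $H$ move the cursor and the base group act on lamps read relative to the cursor. Concretely, for $(f,h)\in G\wr H$ with $f\colon H\to G$ finitely supported, I set $\alpha_k(f,h)(i,c)=(\psi_k(h)(i),c')$, where $c'$ is obtained from $c$ by applying $\phi_k(f(a))$ to the coordinate $\psi_k(a)(i)$ for each $a\in\supp(f)$ and leaving all other coordinates fixed (with a fixed convention for the order of the cursor move and the lamp operations). The point of the cursor is that it furnishes a local origin: the value $f(a)$ is deposited at the site ``$a$ away from the cursor'', namely $\psi_k(a)(i)$, so that no global identification of $[m_k]$ with $H$ is required — this is exactly the obstruction that dooms the naive equivariant map into $\Sym(n_k)^{[m_k]}\rtimes\Sym(m_k)$. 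Each $\alpha_k(f,h)$ is a genuine bijection, and one checks on the generators $\{\delta_e^g:g\in G\}$ and $H$ that the wreath relations are reproduced: $\delta_e^{g_1}\delta_e^{g_2}=\delta_e^{g_1g_2}$ becomes composition of $\phi_k$ at the cursor lamp, and $h\,\delta_e^g\,h^{-1}=\delta_h^g$ becomes a conjugation that transports the active lamp from the cursor to the appropriate $\psi_k$-translate.

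For the separation estimate, the configuration space is exactly what rescues the argument. If $h\neq e$ then $\alpha_k(f,h)$ changes the cursor coordinate whenever $\psi_k(h)(i)\neq i$, which happens for a $(1-o(1))$-fraction of $i$ and hence for a $(1-o(1))$-fraction of all of $\Omega_k$; if $h=e$ but $f(a_0)\neq e$ for some $a_0$, then for almost every cursor $i$ the site $\psi_k(a_0)(i)$ is acted on by $\phi_k(f(a_0))$ alone, and a proportion of the configurations $c$ tending to $1$ have $c(\psi_k(a_0)(i))$ actually moved. In either case almost all of $\Omega_k$ is displaced, so $d_{\Hamm}(\alpha_k(f,h),\id)\to 1$ for every $(f,h)\neq(e,e)$.

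I expect the main obstacle to be the approximate multiplicativity of $\alpha_k$. The errors are of three kinds: failure of $\psi_k$ to be multiplicative, which corrupts both the cursor track and the sites $\psi_k(a)(i)$; failure of $\phi_k$ to be multiplicative at the finitely many lamps that are touched; and \emph{collisions}, in which the sites $\psi_k(a)(i)$ for distinct $a$ in the relevant ball $B\subseteq H$ coincide and lamp operations interfere. The crux is to show that the set of \emph{good} cursor positions $i$ — those for which $a\mapsto\psi_k(a)(i)$ is injective and action-compatible on $B$ and for which no collision occurs — has density tending to $1$; this is precisely where soficity of $H$ enters, through approximate multiplicativity together with asymptotic freeness, and it is the technical heart of the proof. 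On the good cursor positions the two sides of $\alpha_k((f_1,h_1)(f_2,h_2))=\alpha_k(f_1,h_1)\alpha_k(f_2,h_2)$ agree outside a set controlled by the $\phi_k$-errors at the touched lamps, and a counting argument then delivers the required $d_{\Hamm}$-bound. Finally, I would note that this symmetric-group construction is the sofic specialisation of a more general scheme landing in $\Sym(n_k)\wr\Sym(m_k)$, which is what makes the same strategy adaptable to the hyperlinear, weakly sofic, and linear sofic settings.
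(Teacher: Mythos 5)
Your proposal is essentially the paper's own proof: your $\Omega_k=[m_k]\times[n_k]^{[m_k]}$ with the cursor moved by $\psi_k(h)$ and the lamps read relative to the cursor is exactly the composite $\Psi\circ\Theta$ of Sections 3--4 acting on $\bigoplus_B A\oplus B$, your ``good cursor positions'' are the paper's set $B_E=B_1\cap B_2$ whose density is controlled by Lemma \ref{lem:prelim obs}, and your separation argument via the configuration space is the computation $\ell_{\Hamm}(\Phi(\pi_b))=1-\prod_{\beta\in B}\bigl(1-\ell_{\Hamm}(\pi_{b,\beta})\bigr)$ in Section 4.1. The one detail to fix when you check the relation $h\,\delta_e^g\,h^{-1}=\delta_h^g$ is the orientation convention: with the cursor moved by $i\mapsto\psi_k(h)(i)$, the lamp for $a$ must sit at $\psi_k(a)^{-1}$ applied to the (new) cursor, matching the paper's $\beta=\sigma(x)^{-1}b$, rather than at $\psi_k(a)(i)$.
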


When $G$ is abelian, Theorem \ref{thmspecial:sofic} was proved by Paunescu \cite{LPaun}, who used methods of analysis and the notion of sofic equivalence relations developed by Elek and Lippner \cite{ElekLip}.
By Elek and Szabo \cite[Theorem 1]{ESZ1} it follows that $G\wr H$ is sofic if $G$ is sofic and $H$ is amenable. From this, we may apply the work of Vershik and Gordon on local embeddability into finite groups to see that $G\wr H$ is sofic if $G$ is sofic and $H$ is locally embeddable into amenable groups (this follows from the proof of \cite[Proposition 3]{VerGor}). We refer the reader to Holt and Rees \cite{HR} for other results on metric approximation of wreath products (e.{}g.{} for commutator-contractive length functions).

We remark that, using the Magnus embedding (see \cite{Sale_Magnus} for both the original and a modern geometric definition), Theorem \ref{thmspecial:sofic} implies the following (in fact it follows from the weaker version of Paunescu, mentioned above \cite{LPaun}).

\begin{corspecial}
	Let $N$ be a normal subgroup of a finite rank free group $F$, and let $N'$ be the derived subgroup of $N$. If $F/N$ is sofic, then $F/N'$ is sofic.  
\end{corspecial}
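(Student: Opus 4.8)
The plan is to deduce the corollary from Theorem \ref{thmspecial:sofic} by way of the Magnus embedding, thereby reducing the problem about $F/N'$ to a statement about a wreath product that the main theorem already handles. Write $n$ for the rank of $F$. The Magnus embedding (see \cite{Sale_Magnus}) provides an injective homomorphism
\[
F/N' \hookrightarrow \Z^n \wr (F/N),
\]
where the base group is the finitely generated free abelian group $\Z^n$ and the top group is $F/N$. Hence it suffices to prove that the target wreath product is sofic and then pass to the subgroup $F/N'$.

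First I would record that $\Z^n$, being abelian, is amenable and therefore sofic. Since $F/N$ is sofic by hypothesis, Theorem \ref{thmspecial:sofic} applies with $G = \Z^n$ and $H = F/N$, yielding that $\Z^n \wr (F/N)$ is sofic. Because the base group $\Z^n$ is abelian, one may alternatively invoke Paunescu's theorem \cite{LPaun} at exactly this point; this is the sense in which the weaker statement already suffices. I would then invoke the elementary fact that a subgroup of a sofic group is sofic: restricting any sofic approximation of $\Z^n \wr (F/N)$ along the Magnus embedding produces a sofic approximation of $F/N'$, which gives the claim.

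There is essentially no substantive obstacle remaining once Theorem \ref{thmspecial:sofic} is available, as the whole difficulty has been absorbed into the main theorem. The only points demanding care are citing the Magnus embedding in precisely the form above---in particular confirming that its target is genuinely the wreath product $\Z^n \wr (F/N)$ with finitely generated free abelian base, so that $\operatorname{rank}(F) < \infty$ is used to guarantee the base group is sofic---and verifying that the hypotheses of the theorem (soficity of both factors) are met.
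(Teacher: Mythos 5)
Your proposal is correct and is exactly the paper's own argument: the Magnus embedding $F/N' \hookrightarrow \Z^r \wr (F/N)$, soficity of the abelian base group, Theorem \ref{thmspecial:sofic} (or Paunescu's result, since the base is abelian), and the fact that soficity passes to subgroups. Nothing further is needed.
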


\begin{proof}
	The Magnus embedding is $F/N' \hookrightarrow \Z^r \wr (F/N)$. 
	Since soficity passes to subgroups we therefore get the corollary from Theorem \ref{thmspecial:sofic}, or \cite{LPaun}.
\end{proof}

Sofic groups are also weakly sofic, linear sofic and hyperlinear, and  we ask to what extent these properties are preserved by wreath products.
Weakly sofic groups are a  class of groups which can be approximated by finite groups in a weaker sense than sofic groups, namely one is allowed to approximate $G$ by any finite group with any bi-invariant metric, instead of just permutation groups with the Hamming distance,  as is the case for soficity (see Section \ref{sec:apps} for precise definitions). 
Linear soficity and hyperlinearity are each classes of groups which can be approximated by linear groups---to be linear sofic requires approximation by general linear groups with respect to the rank metric, while hyperlinearity requires approximation by unitary groups in the normalized Hilbert-Schmidt distance. 

Our techniques proving Theorem \ref{thmspecial:sofic} generalize to give the following, broader result.

\begin{thmspecial}\label{thmspecial:sofic wreath}
	Let $G,H$ be countable, discrete groups and assume that $H$ is sofic. Then:
\begin{enumerate}[(i)]
\item If $G$ is sofic, then so is $G\wr H,$ \label{I:sofic}
\item If $G$ is hyperlinear, then so is $G\wr H,$\label{I:hyper}
\item If $G$ is linear sofic, then so is $G\wr H,$ \label{I:linear}
\item If $G$ is weakly sofic, then so is $G\wr H.$\label{I:weak}
\end{enumerate}
\end{thmspecial}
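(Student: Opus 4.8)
The plan is to treat all four classes uniformly. Each asserts that $G$ is approximated by a sequence of maps $\phi_n\colon G\to \mathcal{G}_n$ into ``model groups'' carrying a bi-invariant length $\ell_n$ (normalized Hamming length on a symmetric group for soficity; the length $\norm{u-1}$ from the normalized Hilbert--Schmidt metric on a unitary group for hyperlinearity; the rank length on $\GL$ for linear soficity; an arbitrary bi-invariant length on a finite group for weak soficity), such that $\ell_n\big(\phi_n(ab)\phi_n(b)\m\phi_n(a)\m\big)\to 0$ for all $a,b\in G$ and $\liminf_n \ell_n(\phi_n(g))>0$ for every $g\neq e$. Soficity of $H$ supplies $\sigma_n\colon H\to\Sym(\Omega_n)$ with $|\Omega_n|=d_n\to\infty$. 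I would reduce Theorem~\ref{thmspecial:sofic wreath} to a single construction of approximations $\Phi_n\colon G\wr H\to \mathcal{G}_n\wr\Sym(\Omega_n)$, and then check, case by case, that $\mathcal{G}_n\wr\Sym(\Omega_n)=\mathcal{G}_n^{\Omega_n}\rtimes\Sym(\Omega_n)$ sits inside a model group of the required type carrying the appropriate canonical bi-invariant length.

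To define $\Phi_n$, write an element of $G\wr H$ as $(f,h)$ with $f\in\bigoplus_HG$ finitely supported, and use the identity $(f,h)=\big(\prod_{s\in\supp f} s\,\iota(f(s))\,s\m\big)\cdot h$, where $\iota$ includes $G$ at the identity coordinate. I set $\Phi_n(f,h)=\Psi_n(f)\cdot\sigma_n(h)$, where $\Psi_n(f)\in\mathcal{G}_n^{\Omega_n}$ places $\phi_n(f(s))$ in coordinate $\sigma_n(s)(b_0)$ for each $s\in\supp f$ and the identity elsewhere. Here $b_0=b_0(n)\in\Omega_n$ is a base point chosen, for each fixed finite list of elements and each large $n$, so that the finitely many equalities $\sigma_n(h)\sigma_n(s)(b_0)=\sigma_n(hs)(b_0)$ and the injectivity of $s\mapsto\sigma_n(s)(b_0)$ on the relevant finite set all hold; such a ``good'' base point exists because the sofic conditions force the bad base points to form a vanishing proportion. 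This makes the placement approximately $H$-equivariant, which is exactly what is needed for $\Phi_n$ to be asymptotically multiplicative.

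The weak sofic case is the clean prototype, and I would do it first. Equip $\mathcal{G}_n\wr\Sym(\Omega_n)$ with the bi-invariant length $L_n\big((F_i)_i,\sigma\big)=\max\big(\max_i \ell_n(F_i),\,d_{\mathrm{Hamm}}(\sigma,e)\big)$; the maximum of two bi-invariant lengths is again bi-invariant and subadditive. Faithfulness is immediate: if $h\neq e$ then $L_n(\Phi_n(f,h))\ge d_{\mathrm{Hamm}}(\sigma_n(h),e)\to 1$, while if $h=e$ and $f\neq e$ then some coordinate carries $\phi_n(g)$ with $g\neq e$, so $L_n\ge \ell_n(\phi_n(g))$ is bounded below. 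Asymptotic multiplicativity splits the defect into its base part, which at every coordinate is either trivial or of the form $\phi_n(a)\phi_n(b)\phi_n(ab)\m$ (small once coordinates line up, by the choice of good base point), and its permutation part $\sigma_n(h_1h_2)\sigma_n(h_2)\m\sigma_n(h_1)\m$, which is small in Hamming length; their maximum tends to $0$. The essential feature is that the $\max$-length \emph{decouples} the base from the $H$-shift.

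The main obstacle is the other three classes, where the length is not ours to design but is the canonical one attached to a single representation, so the base and the $H$-shift become \emph{coupled}. Two incompatible pressures appear. To keep a single lamp $\iota(g)$ from being washed out, its image must occupy a full share---not a $1/d_n$ share---of the metric; this pushes toward a \emph{product/tensor} realization of the base (on $[k_n]^{\Omega_n}$ for permutations, on $(\C^{k_n})^{\otimes\Omega_n}$ for unitaries), where a single coordinate is seen at full weight because the relevant trace is multiplicative across coordinates. But there $H$ acts by permuting coordinates, and the sofic multiplication defect, though it disturbs only an $o(d_n)$ proportion of coordinates, is far from the identity in the ambient metric---fatal for multiplicativity. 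Conversely the imprimitive realization (on $\Omega_n\times[k_n]$, resp.\ $\C^{\Omega_n}\otimes\C^{k_n}$) keeps the $H$-defect negligible but suppresses single lamps by $1/d_n$. Reconciling the two is the heart of the argument: I would realize the base on the configuration space of \emph{boundedly supported} functions $\Omega_n\to[k_n]$, with support size allowed to grow like $R(n)\to\infty$ chosen so that $R(n)\,t_n/d_n\to0$, where $t_n=o(d_n)$ bounds the coordinates moved by a multiplication defect. Then a shift defect meets the $\le R(n)$ active coordinates of a typical configuration with probability $\to 0$, so it is negligible in the canonical metric, while a single lamp, acting by left multiplication in one coordinate, still moves a definite proportion of configurations. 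One then checks that the resulting Hamming/Hilbert--Schmidt/rank length simultaneously yields asymptotic multiplicativity and the uniform lower bound, and that the ambient group is of the required type. I expect the delicate part to be controlling the boundary of the bounded-support space under the group operations and calibrating the growth rate $R(n)$ against the defect size $t_n$.
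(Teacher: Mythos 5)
Your high-level strategy (a single construction into $\mathcal{G}_n\wr\Sym(\Omega_n)$, then embeddings into model groups) matches the paper's, but the proposal has a concrete gap already in the ``clean prototype.'' The length $L_n\big((F_i)_i,\sigma\big)=\max\big(\max_i\ell_n(F_i),\ \ell_{\Hamm}(\sigma)\big)$ is symmetric and subadditive but \emph{not} conjugation-invariant on the wreath product, so the associated metric is not bi-invariant and the pair $(\mathcal{G}_n\wr\Sym(\Omega_n),L_n)$ is not an admissible member of the class defining weak soficity. For example, with $\tau=(1\,2)$ and $k=(k_0,1,\dots,1)$, one computes $(k,e)^{-1}(\mathbf{1},\tau)(k,e)=\big((k_0^{-1},k_0,1,\dots,1),\tau\big)$, whose $L_n$-length is $\max\{\ell_n(k_0),\,2/|\Omega_n|\}$ rather than $2/|\Omega_n|$. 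The conjugation-invariant lengths of this general shape are the ones in Proposition \ref{P:conjinvlengthfunc}, which weight the base by a \emph{normalized sum over the fixed points} of the permutation; but with such a length your single-basepoint placement kills injectivity, since a lamp configuration $f$ occupies only $|\supp f|$ of the $|\Omega_n|$ outer coordinates and hence has length $O(|\supp f|/|\Omega_n|)\to 0$. This is exactly the tension you identify, but your proposed escape does not resolve it even in the easiest case. The paper's resolution is structurally different: it maps into $\big(\bigoplus_B K\big)\wr_B\Sym(B)$, keeping one copy of $K$ for \emph{every} candidate basepoint $\beta\in B$ simultaneously; the outer length is the conjugation-invariant fixed-point sum, while the inner length on $\bigoplus_B K$ may be taken to be the max (which \emph{is} conjugation-invariant there, because no permutation mixes the inner coordinates). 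A single lamp then contributes at full weight to essentially every outer summand, so the normalized sum no longer washes it out, and the permutation defects remain cheap.

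For the three non-designable metrics your argument is, by your own account, only a sketch, and the bounded-support configuration-space idea is not the route the paper takes (the paper tensors over the \emph{inner} index of the multiplexed base, so the $H$-shift never permutes tensor factors and the two pressures decouple exactly). More importantly, the sketch misses a second obstruction that any tensor-product realization must face: ``far from $\id$'' is not preserved under tensor products, since for instance $\lambda\id\otimes\lambda^{-1}\id=\id$. Thus a configuration with two or more nontrivial lamps can land arbitrarily close to the identity even when each $\phi_n(g)$ is far from it, and your injectivity lower bound fails. One must first upgrade the approximations of $G$ so that each $\phi_n(g)$, $g\neq 1$, is uniformly far from the scalar matrices: this uses R\u{a}dulescu's theorem (Proposition \ref{P:projunitary}) in the Hilbert--Schmidt case and a Jordan-block/Clebsch--Gordan analysis (Theorem \ref{thm:tensor jordans} and Proposition \ref{prop:tesnorlowerbound}) in the rank-metric case. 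Neither ingredient, nor any substitute for it, appears in the proposal.
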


The proof of Theorem \ref{thmspecial:sofic wreath} is constructive,  and is almost entirely self-contained, the exceptions being the use of equivalent definitions of soficity, hyperlinearity, and linear soficity, and a result used for \eqref{I:linear} that concerns the behaviour of Jordan blocks under tensor products. 
The first step in the proof of Theorem \ref{thmspecial:sofic wreath} is a general result on metric approximations of groups, the proof of which is quantitative (see Proposition \ref{prop:metric approx}). 

Part \eqref{I:weak} follows immediately from Proposition \ref{prop:metric approx}, while the other three parts require extra constructions.

	We remark that the arguments in the matricial cases (\ref{I:hyper}),(\ref{I:linear}) are more delicate, as each of these arguments require tensor products of operators.
	In (\ref{I:linear}), for example, linear soficity of $G$ allows us to find  almost homomorphisms $\theta\colon G\to \GL_{n}(\FF)$, for some field $\FF$, so that $\frac{1}{n}\rank(\theta(g)-\id)$ is bounded away from zero for $g\in G\setminus\{1\}$. 
	However, this property is not stable under taking tensor products: for example if $\frac{1}{n}\rank(\theta(g)-5\id)$  and $\frac{1}{n}\rank(\theta(h)-\frac{1}{5}\id)$ 
	 are both small for some $g,h\in G,$ then $\frac{1}{n^{2}}\rank(\theta(g)\otimes \theta(h)-\id)$ will be small. 
	Because of this issue, we have to remark that linear soficity in fact implies that we can find an almost homomorphism $\theta\colon G\to \GL_{n}(\FF)$ so that $\inf_{\lambda\in \FF\setminus\{0\}}\frac{1}{n}\rank(\theta(g)-\lambda\id)$ is bounded away from zero for $g\in G\setminus\{1\}$. 
	A similar issue occurs in the hyperlinear case, where we find an almost homomorphism $\theta\colon G\to \mathcal{U}(n)$ so that $\theta(g)$ stays a bounded distance away from the scalar matrices (a result of Radulescu \cite{Radul} enables us to do this for case \eqref{I:hyper}).
	In each of these cases, forcing the image of our group elements to be far away from the scalars is a property that is stable under tensor products. This is a direct computation in the unitary case, whereas the argument that this is true in the general linear case is more involved (see Proposition \ref{prop:tesnorlowerbound}).

The structure of the paper is as follows. 
  Section \ref{sec:prelims} contains the definition of $\calC$--approximable groups, and the definition of sofic groups that we use. This section also looks at how we may determine that a map from a wreath product to a group is almost multiplicative, and how we endow our wreath products with suitable metrics.
  Once this is established, we give the initial construction of the metric approximations of a wreath product in Section \ref{S:construction}, before extending this to each of the specific cases of Theorem \ref{thmspecial:sofic wreath} in Section \ref{sec:apps}.

\textbf{Acknowledgments.} 
We wish to thank the diligence and thorough work of the anonymous referee.
The first-named author would like to thank Jesse Peterson for asking him if wreath products of sofic groups are sofic at the NCGOA Spring Institute in 2012 at Vanderbilt University.

\section{Preliminaries}\label{sec:prelims}

We begin with the necessary definitions, as well as a useful lemma to help us identify metric approximations in wreath products.
We first establish some notation.

\begin{notation}
Throughout we will use $1$ to denote the identity element of a group (we expect the reader to be able to infer which group it comes from), except when we talk of the identity matrix, when we use $\id$.
\end{notation}

The \emph{metric approximations},  to which we have referred, can be defined as an embedding of a group into a metric ultraproduct of groups, each with a given bi-invariant metric.
Such an embedding gives rise to a sequence of maps to the groups in the ultraproduct.
It is these maps on which we focus our attention.

Before we define key properties of these maps, we remind the reader that a metric $d$ on a group $H$ is said to be \emph{bi-invariant} if $d(axb,ayb)=d(x,y)$ for all $a,b,x,y\in H.$
Throughout the paper we will work with such metrics and their corresponding length functions.
We recall that a function $\ell\colon H\to [0,\infty)$ is a \emph{length function} on $H$ if:
	\begin{itemize}
		\item $\ell(h)=\ell(h^{-1})$ for all $h\in H,$
		\item $\ell(gh)\leq \ell(g)+\ell(h)$ for all $g,h\in H.$ 
	\end{itemize}
	We say that $\ell$ is \emph{conjugacy-invariant} if also $\ell(xgx^{-1})=\ell(g)$ for all $x,g\in H.$

A conjugacy-invariant length function $\ell$ on $G$ defines a bi-invariant metric by
$d(x,y)=\ell(y^{-1}x)$.
Conversely if $G$ has a bi-invariant metric $d,$ then $\ell(x)=d(x,1)$ is a conjugacy-invariant length function.

\begin{notation}
	When we switch between metrics and length functions we will pair them up with equivalent decorations on the notation.
	For example, a metric $d'$ will correspond to a length function $\ell'$.
\end{notation}

\begin{defn} 
	Let $H$ be a group with a bi-invariant metric $d.$ Fix a group $G$ and a function $\theta\colon G\to H.$ 
\begin{enumerate}[(a)]
	\item Given $F\subseteq G$ and $\varepsilon>0$ we say that $\theta$ is \emph{$(F,\varepsilon,d)$--multiplicative} if $\theta(1)=1$ and
	\[\max_{g,h\in F}d\big(\theta(gh),\theta(g)\theta(h)\big)<\varepsilon.\]
	
	\item Given $F\subseteq G$ and a function $c\colon G\setminus\{1\}\to (0,\infty)$ we say that $\theta$ is \emph{$(F,c,d)$--injective} if for all $g\in F\setminus \{1\}$
	$$d\big(\theta(g),1\big)\geq c(g).$$
\end{enumerate}	
\end{defn}

We remark that we will use the phrases \emph{almost multiplicative} and \emph{almost injective} to mean $(F,\varepsilon,d)$--multiplicative and $(F,c,d)$--injective respectively when we do not wish to specify $F,\varepsilon, c$ and $d$.

\begin{defn}
	Let $\mathcal{C}$ be a class of pairs $(H,d)$, where $H$ is a group and $d$ a bi-invariant metric on $H$ (the same group may appear multiple times in $\calC$ with different metrics). 
	We say that a group $G$ is \emph{$\mathcal{C}$--approximable} if there is a function $c\colon G\setminus\{1\}\to (0,\infty)$ so that for every finite $F\subseteq G$ and $\varepsilon>0$ there is a pair $(H,d)\in \mathcal{C}$ and an $(F,\varepsilon,d)$--multiplicative function $\theta\colon G\to H$ which is also $(F,c,d)$--injective.
\end{defn}

A special example of $\calC$--approximable groups are sofic groups, where $\calC$ consists of the finite symmetric groups paired with the normalized Hamming distance (see \cite{ElekSzaboDeterminant}).

\begin{defn}\label{def:Hamm} Let $A$ be a finite set. The \emph{normalized Hamming distance}, denoted $d_{\Hamm},$ on $\Sym(A)$ is defined by
\[d_{\Hamm}(\pi,\tau)=\frac{1}{\modulus{A}}\left|\{a\in A:\pi(a)\ne \tau(a)\}\right|.\]
The corresponding length function is denoted $\ell_{\Hamm}$.
\end{defn}

For our purposes, we will use an alternative (but equivalent) definition of soficity (see \cite[Thm. 1]{ElekSzaboDeterminant}).

\begin{defn} Let $G$ be a countable discrete group, $F$  a finite subset of $G$, and $\varepsilon>0.$ Fix a finite set $A$ and a function $\sigma\colon G\to \Sym(A).$ 
 We say that $\sigma$ is \emph{$(F,\varepsilon)$--free} if
\[\min_{g\in F\setminus\{1\}}\ell_{\Hamm}(\sigma(g))> 1-\varepsilon.\]
We say that $\sigma$ is an \emph{$(F,\varepsilon)$--sofic approximation} if it is $(F,\varepsilon,d_{\Hamm})$--multiplicative, and $(F,\varepsilon)$--free.
Lastly, we say that $G$ is \emph{sofic} if for every finite $F\subseteq G$ and $\varepsilon>0,$ there is a finite set $A$ and an $(F,\varepsilon)$--sofic approximation $\sigma\colon G\to \Sym(A).$
\end{defn}

Our aim is to  start with  approximations for $G$ and $H$ and use them to build  approximations for the wreath product $G\wr H$.  We will use permutational wreath products in our approximations, and we recall here the definitions. Note that all  our wreath products are of the restricted variety, meaning we  use direct sums rather than direct products.

\begin{defn}
Let $X$ be a set on which $H$ acts.
The \emph{permutational wreath product} is defined as 
		$$G\wr_X H = \bigoplus\limits_{X}G \rtimes H$$
where the action of $h\in H$ is given via $\alpha_{h}\in \Aut\left(\bigoplus_{X}G\right)$, defined by a coordinate shift:
		\[\alpha_{h}\Big( (g_{x})_{x\in X} \Big)=(g_{h\m x})_{x\in X}.\]
The regular wreath product $G\wr H$ is defined as above, taking $X=H$ with $H$ acting on itself by left-multiplication.
\end{defn}

A homomorphism $\varphi\colon G\wr H\to K$, for some group $K$, can be decomposed into a pair of homomorphisms $\varphi_{1}\colon \bigoplus_{H}G\to K$, $\varphi_{2}\colon H\to K$ which satisfy the following equivariance condition:
\[\varphi_{2}(h)\varphi_{1}(g)=\varphi_{1}(\alpha_{h}(g))\varphi_{2}(h),\mbox{ for all $h\in H,g\in \bigoplus_{H}G.$}\]
The following lemma gives an analogue to this for the case of almost multiplicative maps.

\begin{lem}\label{l:almosthomom} Let $G,H$ be countable, discrete groups, 
	and let
	$$\proj_H \colon G \wr H \to H \textrm{ and } \proj_G \colon G\wr H \to \bigoplus_H G$$ 
	be the natural projection maps  (note that the latter is not a homomorphism).
	For a finite subset $F_0\subseteq G\wr H$ define subsets 
	\begin{align*}
	E_1 &= \big\{ \alpha_h (g) : h\in \proj_H(F_0) \cup \{1\} , g\in \proj_G(F_0) \big\}, \\
	E_2 &= \proj_H(F_0).
	\end{align*}
	
	Let $\varepsilon>0$ and $K$ be a group with a bi-invariant metric $d$. Suppose $\Theta\colon G\wr H\to K$ is a map with $\Theta(1)=1$ such that 
	
	\begin{itemize}
		\item the restriction of $\Theta$ to $\displaystyle \bigoplus_{H}G$ is $(E_{1},\varepsilon/6,d)$--multiplicative,
		\item the restriction of $\Theta$ to $H$ is $(E_{2},\varepsilon/6,d)$--multiplicative, \phantom{$\displaystyle \bigoplus_{H}G$} %added for visual effect
		\item $\displaystyle \max_{g\in E_{1},h\in E_{2}}d\big(\Theta(g,h),\Theta(g,1)\Theta(1,h)\big)<\varepsilon/6$,
		\phantom{$\displaystyle \bigoplus_{H}G$} %added for visual effect
		\item $\displaystyle \max_{g\in E_{1},h\in E_{2}}d\left(\Theta(1,h)\Theta(g,1),\Theta(\alpha_{h}(g),1)\Theta(1,h)\right)<\varepsilon/6$.
		\phantom{$\displaystyle \bigoplus_{H}G$} %added for visual effect
	\end{itemize}
	Then $\Theta$ is $(F_0,\varepsilon,d)$--multiplicative.
	
\end{lem}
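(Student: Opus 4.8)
The plan is to verify the multiplicativity bound on a generic pair $a,b\in F_0$. Write $a=(g_1,h_1)$ and $b=(g_2,h_2)$, so that $g_1,g_2\in\proj_G(F_0)$ and $h_1,h_2\in\proj_H(F_0)=E_2$, and compute from the semidirect product law that $ab=(g_1\alpha_{h_1}(g_2),\,h_1h_2)$. I would then connect $\Theta(a)\Theta(b)$ to $\Theta(ab)$ by a single telescoping chain of six rewrites, each of which alters the word in one localized block by applying exactly one of the four hypotheses. Since $d$ is bi-invariant, any common left or right factor shared by two consecutive words can be stripped off at no cost, so each rewrite contributes at most $\varepsilon/6$ to the total distance; summing the six contributions yields the desired bound $<\varepsilon$. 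The budget $\varepsilon/6$ in the hypotheses is exactly what one expects for a chain of six steps.

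Concretely, I would pass through the intermediate words
\begin{align*}
\Theta(g_1,h_1)\Theta(g_2,h_2)
&\to \Theta(g_1,1)\Theta(1,h_1)\Theta(g_2,h_2)\\
&\to \Theta(g_1,1)\Theta(1,h_1)\Theta(g_2,1)\Theta(1,h_2)\\
&\to \Theta(g_1,1)\Theta(\alpha_{h_1}(g_2),1)\Theta(1,h_1)\Theta(1,h_2)\\
&\to \Theta(g_1\alpha_{h_1}(g_2),1)\Theta(1,h_1)\Theta(1,h_2)\\
&\to \Theta(g_1\alpha_{h_1}(g_2),1)\Theta(1,h_1h_2)\\
&\to \Theta(g_1\alpha_{h_1}(g_2),h_1h_2).
\end{align*}
The first two arrows split off the $H$--coordinate of each factor using the third hypothesis; the third arrow is the approximate equivariance (fourth hypothesis), applied to the interior sub-word $\Theta(1,h_1)\Theta(g_2,1)$ with the common factors $\Theta(g_1,1)$ and $\Theta(1,h_2)$ stripped off by bi-invariance; the fourth and fifth arrows merge the two $G$--values and the two $H$--values using the first and second hypotheses; and the last arrow recombines into a single value of $\Theta$ via the third hypothesis in reverse.

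The routine but essential part of the argument is the bookkeeping needed to guarantee that every element handed to a hypothesis actually lies in the set for which that hypothesis is assumed, and this is precisely what the definitions of $E_1$ and $E_2$ are engineered to arrange. Including the index $h=1$ in the definition of $E_1$ puts $g_1=\alpha_1(g_1)$ into $E_1$, while including $h\in\proj_H(F_0)$ puts $\alpha_{h_1}(g_2)$ into $E_1$; together with $h_1,h_2\in E_2$ this licenses the two splittings (arrows one and two), the equivariance move (arrow three), and the two merges (arrows four and five, which only require the individual factors $g_1,\alpha_{h_1}(g_2)$ and $h_1,h_2$ to lie in $E_1,E_2$). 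I expect the main obstacle to be the final recombination (the last arrow): applying the third hypothesis there requires the \emph{combined} elements $g_1\alpha_{h_1}(g_2)$ and $h_1h_2$ to lie in $E_1$ and $E_2$, which is the one place where a product of two elements of the index sets must be controlled. Pinning this down --- ensuring these combined elements remain inside the sets on which the splitting hypothesis has been assumed --- is the step that makes or breaks the chain, after which the six $\varepsilon/6$ estimates add up to the claim.
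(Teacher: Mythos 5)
Your telescoping chain is exactly the triangle-inequality argument the paper has in mind (its own proof is one line, leaving the verification to the reader), and five of your six steps are correctly licensed: the two splittings use the third hypothesis at $(g_1,h_1)$ and $(g_2,h_2)$ with $g_1=\alpha_{1}(g_1),\,g_2=\alpha_{1}(g_2)\in E_1$ and $h_1,h_2\in E_2$; the equivariance move uses the fourth hypothesis at $(g_2,h_1)$; and the two merges use the first and second hypotheses at $g_1,\alpha_{h_1}(g_2)\in E_1$ and $h_1,h_2\in E_2$. But the obstacle you flag at the final recombination is a genuine one, and no amount of bookkeeping removes it: $g_1\alpha_{h_1}(g_2)$ and $h_1h_2$ need not lie in $E_1$ and $E_2$ (already for $F_0$ a single element of $\Z\wr\Z$ the $G$-coordinate of the product has support of size two, while every element of $E_1$ has support of size one, and $h_1h_2=2h_1\notin E_2$). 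Indeed the lemma is not provable from the four bullets exactly as indexed: take $\Theta$ to be an honest homomorphism modified at the single point $(g_1\alpha_{h_1}(g_2),h_1h_2)$, which has both coordinates nontrivial and is touched by none of the four hypotheses; every bullet then holds with value $0$, yet the conclusion fails.

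What rescues the lemma in the paper is that in its only application the third bullet holds with exact equality for \emph{every} pair $(g,h)\in\bigoplus_H G\times H$, since $\Theta(g,h)=(\theta_B(g),\sigma(h))=\Theta(g,1)\Theta(1,h)$ by construction (the paper says as much when verifying the hypotheses in Section \ref{S:construction}); the recombination step is then free and only five of the six budgets of $\varepsilon/6$ are actually spent. So the honest conclusion of your analysis is: either strengthen the third hypothesis so that its index set also contains the pairs $\big(g_1\alpha_{h_1}(g_2),h_1h_2\big)$ arising from products of elements of $F_0$ (or simply require it over all of $\bigoplus_H G\times H$), after which your six-step chain closes and the strict inequalities sum to $<\varepsilon$; or record explicitly that the hypothesis is only ever invoked in the exact form in which the paper verifies it. Apart from this point---which is a defect of the statement rather than of your argument---your proof is complete and coincides with the intended one.
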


\begin{proof}
	Note that if $(g,h),(g',h')$ are in $F_0$, then  $g, g',\alpha_h(g') \in E_1$, and $h,h'\in E_2$.
	Applying the triangle inequality gives the result.
	Verification of this is left to the reader.
\end{proof}

In our construction we use maps to groups of the form $L \wr_B \Sym(B)$, for some group $L$ endowed with a bi-invariant metric.
To make sense of the notions of almost multiplicativity and almost injectivity we need a bi-invariant metric on this wreath product.
The following proposition explains how we do this, using the language of length functions.
This was described independently by Holt and Rees \cite[\S 5]{HR}.

\begin{prop}\label{P:conjinvlengthfunc} Let $L$ be a group with a conjugacy-invariant length function $\ell$ and suppose that $\ell(g)\leq 1$ for all $g\in G$.
For a finite set $B$, define $\tilde\ell$ on $L\wr_{B}\Sym(B)$ by
\[\tilde\ell\big((k_{b})_{b\in B},\tau\big)=\ell_{\Hamm}(\tau)+\frac{1}{|B|}\sum_{\substack{b\in B\\\tau(b)=b}}\ell(k_{b}).\]
Then $\tilde\ell$ is a conjugacy-invariant length function.
\end{prop}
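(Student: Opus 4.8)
The plan is to verify the three defining properties of a conjugacy-invariant length function for $\tilde\ell$ directly: first that $\tilde\ell$ is symmetric under inversion, second that it is subadditive, and third that it is conjugacy-invariant. Throughout I would write a typical element of $L\wr_B\Sym(B)$ as $w=\big((k_b)_{b\in B},\tau\big)$, and I would keep in mind the two group operations: the semidirect product multiplication
\[
\big((k_b)_b,\tau\big)\big((k'_b)_b,\tau'\big)=\big((k_b\,k'_{\tau\m(b)})_b,\,\tau\tau'\big),
\]
and the inverse $\big((k_b)_b,\tau\big)\m=\big((k_{\tau(b)}\m)_b,\tau\m\big)$. Note that $\tilde\ell$ splits as a sum of two pieces: the Hamming term $\ell_{\Hamm}(\tau)$, which I already know is a conjugacy-invariant length function on $\Sym(B)$ composed with the (group) projection to $\Sym(B)$, and the ``fixed-point'' term $\frac{1}{|B|}\sum_{\tau(b)=b}\ell(k_b)$, which is the genuinely new object. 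Since a sum of conjugacy-invariant length functions is again one, it suffices to check the three properties for the fixed-point term, though I would carry the Hamming term along to make the conjugation bookkeeping transparent.

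For \emph{symmetry}, I would observe that $w$ and $w\m$ have the same permutation part up to inverse, so $\ell_{\Hamm}(\tau\m)=\ell_{\Hamm}(\tau)$, and the set of fixed points of $\tau$ and of $\tau\m$ coincide. On each such fixed point $b$, the $b$-coordinate of $w\m$ is $k_{\tau(b)}\m=k_b\m$, so $\ell(k_b\m)=\ell(k_b)$ by symmetry of $\ell$; hence the fixed-point sums agree and $\tilde\ell(w\m)=\tilde\ell(w)$. For \emph{subadditivity}, write the product as above with permutation part $\tau\tau'$. The key point is that if $b$ is a fixed point of the product $\tau\tau'$ but $b$ is not a common fixed point of $\tau$ and $\tau'$, then $b$ contributes to the Hamming discrepancy between $\tau$ and $(\tau')\m$, so such terms are absorbed by $\ell_{\Hamm}$; and on a genuine common fixed point the coordinate is $k_b\,k'_{\tau\m(b)}=k_b k'_b$, whose length is at most $\ell(k_b)+\ell(k'_b)$ by subadditivity of $\ell$. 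I would use the hypothesis $\ell\le 1$ precisely here: each ``bad'' coordinate (fixed by $\tau\tau'$ but not by both factors) contributes at most $\frac{1}{|B|}$ to the fixed-point sum, which is bounded by its contribution to $\ell_{\Hamm}(\tau)+\ell_{\Hamm}(\tau')$.

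For \emph{conjugacy-invariance}, I would conjugate $w=\big((k_b)_b,\tau\big)$ by an arbitrary $u=\big((m_b)_b,\sigma\big)$ and compute that the permutation part of $uwu\m$ is $\sigma\tau\sigma\m$, whose fixed points are exactly $\sigma(\text{fixed points of }\tau)$, and that on a fixed point $\sigma(b)$ the corresponding coordinate is a conjugate $m_{\sigma(b)}\,k_b\,m_{\sigma(b)}\m$ of $k_b$ (after tracking the index shifts in the semidirect product); conjugacy-invariance of $\ell$ then gives equality of the fixed-point sums, while $\ell_{\Hamm}$ is already conjugacy-invariant. The main obstacle I anticipate is the subadditivity step: the fixed-point set of a product need not relate simply to those of the factors, so the whole argument hinges on the bookkeeping that controls exactly which coordinates can appear in the product's fixed-point sum but not in the factors', and on using $\ell\le 1$ to charge those coordinates against the Hamming term. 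The inversion and conjugation steps are then routine index-chasing through the semidirect product formula.
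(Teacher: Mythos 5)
Your proposal is correct and follows essentially the same route as the paper: conjugacy-invariance and symmetry by direct index-chasing through the semidirect product formulas, and subadditivity by splitting the fixed points of the product permutation into common fixed points (handled by subadditivity of $\ell$) and ``bad'' points fixed by the product but by neither factor, each charged at cost $1/|B|$ (using $\ell\le 1$) against the Hamming terms. One caveat: your opening remark that it suffices to check the properties for the fixed-point term alone is not right---that term is not subadditive on its own---but your actual subadditivity argument does not rely on this reduction and correctly exploits the interaction with the Hamming term, exactly as the paper does.
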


\begin{proof}
We first show that $\tilde\ell$ is conjugacy-invariant. 
Fix $h=(h_b),k=(k_b)\in \bigoplus_{B}L$ and $\pi,\tau\in \Sym(B).$ 
Then
\[(k,\tau)^{-1}(h,\pi)(k,\tau) = \big( \alpha_{\tau\m}(k^{-1}h)  \alpha_{\tau\m\pi}(k) , \tau^{-1}\pi\tau \big).\]
Using the conjugacy-invariance of $\ell_{\Hamm}$ we have:
\[\tilde\ell\big((k,\tau)^{-1}(h,\pi)(k,\tau)\big) 
=
 \ell_{\Hamm}(\pi) 
 +
 \frac{1}{|B|}\sum_{\substack{b\in B\\\tau^{-1}\pi\tau(b)=b}} \ell(k^{-1}_{\tau(b)}h_{\tau(b)}k_{\pi^{-1}\tau(b)}).\]
Note that if $\tau^{-1}\pi\tau(b)=b,$ then $\tau(b)=\pi^{-1}\tau(b)$.
We can use this to rewrite the summation term above, and then use the conjugacy invariance of $\ell$ to further simplify it:
\[\frac{1}{|B|} \sum_{\substack{b\in B\\\pi\tau(b)=\tau(b)}} \ell(k^{-1}_{\tau(b)}h_{\tau(b)}k_{\tau(b)}) 
=
\frac{1}{|B|} \sum_{\substack{b\in B\\\pi\tau(b)=\tau(b)}} \ell(h_{\tau(b)}) 
=
\frac{1}{|B|}\sum_{\substack{b\in B\\\pi(b)=b}}\ell(h_{b}).\]
Thus we see that 
\[\tilde\ell\big((k,\tau)^{-1}(h,\pi)(k,\tau)\big) 
=
\ell_{\Hamm}(\pi)
+
\frac{1}{|B|} \sum_{\substack{b\in B\\ \pi(b)=b}} \ell(h_{b})=\tilde\ell(h,\pi).\]
The proof that $\tilde\ell\big((k,\pi)^{-1}\big)=\tilde\ell(k,\pi)$ is similar.

We now prove the triangle inequality. Take $h,k,\pi,\tau$ as above. Then
\begin{align*}
\ell \big( (k,\tau)(h,\pi) \big)
&=\ell_{\Hamm}(\tau\pi) 
+
\frac{1}{|B|} \sum_{\substack{b\in B\\\pi(b)=\tau^{-1}(b)}} \ell(k_{b}h_{\tau^{-1}(b)})\\
&=\ell_{\Hamm}(\tau\pi)
+
\frac{1}{|B|} \sum_{\substack{b\in B\\\pi(b)=\tau^{-1}(b)}} \ell(k_{b}h_{\pi(b)})
\end{align*}
Let $\hat{B} = \{b\in B:\pi(b)=\tau^{-1}(b) \ne b\}$. Then, using the fact that $\ell$ is bounded by $1$ we get
	$$\ell \big( (k,\tau)(h,\pi) \big) 
	\leq
	\ell_{\Hamm}(\tau\pi)
	+
	\frac{1}{\modulus{B}}\left( |{\hat{B}}| + \sum_{\substack{b\in {B}\\\tau(b)=b}} \ell(k_{b})+ \sum_{\substack{b\in B \\ \pi(b) = b}} \ell(h_{b}) \right).$$
From the above we see that it is enough to show that 
\[\ell_{\Hamm}(\tau\pi)+\frac{|\hat{B}|}{\modulus{B}}\leq \ell_{\Hamm}(\tau)+\ell_{\Hamm}(\pi).\]
Using the definition of the Hamming distance, we need
\[|\{b:\pi(b)\ne\tau^{-1}(b)\}|+|\hat{B}|\leq |\{b\in B:\tau(b)\ne b\}|+|\{b\in B:\pi(b)\ne b\}|.\]
Since $\hat{B} \subseteq \{b\in B : \pi(b) = b\}$, we get that the above is the same as:
\[|\{b:\pi(b)\ne \tau^{-1}(b)\}|\leq |\{b\in B:\tau^{-1}(b)\ne b\}| + |\{b\in B:\pi(b)\ne b,\pi(b)\ne \tau^{-1}(b)\}|,\]
which we can deduce from the inclusion
\[\{b:\pi(b)\ne \tau^{-1}(b)\}\subseteq \{b\in B:\tau^{-1}(b)\ne b\} \cup \{b\in B:\pi(b)\ne b,\pi(b)\ne \tau^{-1}(b)\}.\]
This completes the proof of the triangle inequality and thus of Proposition \ref{P:conjinvlengthfunc}.
\end{proof}

\section{Construction of the Approximation}\label{S:construction}

 In the following we let $G,H,K$ be groups, $B$ a finite set, and we suppose that functions $\theta\colon G\to K$ and $\sigma\colon H\to \Sym(B)$ (not necessarily homomorphisms) are given.

\subsection{Some intuition}\label{sec:intuition}
To give some idea of the intuition behind the construction that follows, consider first how one can think of an element of a wreath product $G\wr H$.
One may consider $(g,h) \in G\wr H$, where $g=(g_x)_{x\in H}$, as a journey through $H$, starting at the identity, finishing at $h$, and picking up elements of $G$ at selected points of $H$ (namely, pick up $g_x$ at $x$ whenever $g_x \neq 1$).

If both $G,H$ are sofic, we wish to construct a finite model for $G\wr H$ using symmetric groups (here $K= \Sym(A)$).
A sofic approximation, roughly speaking, gives us a finite set ($A$ or $B$), inside of which a significant part of the set behaves like a prescribed finite subset of $G$ or $H$ respectively (see e.g. \cite[p. 157]{GromovSofic}, \cite[Prop 4.4]{ESZKaplansky}).
We ultimately seek such a set for $G\wr H$, and first we may try to combine $A$ and $B$ in a way which mimics the wreath product of groups.
However this approach leads to a problem.

The problem is that in the approximation of $H$ using $B$ there is no prescribed point in $B$ representing the identity.
Thus the ``journey'' through $H$ from the identity to $h$ will translate to a ``journey'' in $B$ from $\beta$ to $b=\sigma(h)\beta$, where the choice of $\beta$ is arbitrary, and may be allowed to vary.

It is for this reason that, in the construction below, we use $\bigoplus_B K$, rather than just $K$, in the  wreath product $(\bigoplus_B K )\wr_B \Sym(B)$ that we map into. 
This could be interpreted as using one copy of $K$ for each choice of ``identity vertex'' in $B$.

\subsection{The construction}\label{sec:construction subsec}
The aim of this section is to define an approximation of $G\wr H$ into a wreath product that is, in some sense, smaller,  or more controllable, than $G\wr H$. 
In Section \ref{sec:apps} this approximation is then used to prove each part of Theorem \ref{thmspecial:sofic wreath}, composing it with a further approximation into the specific type of group for each property.
The only exception is part \eqref{I:weak}, where weak soficity follows immediately from this construction and Proposition \ref{prop:metric approx}.
	Given a group $K$ and a set $B$, we consider the wreath product
	\[\left(\bigoplus_B K\right) \wr_B \Sym(B) =   \bigoplus_{B}\left(\bigoplus_{B}K\right)\rtimes \Sym(B)  ,\]
	so $\pi \in \Sym(B)$ acts on $\bigoplus_{B}\left(\bigoplus_{B}K\right)$ by $\alpha_{\pi}$ where
	\[\alpha_{\pi}\big((k_{b})_{b\in B}\big)=(k_{\pi^{-1}(b)})_{b\in B},\mbox{ if $k_{b}\in \bigoplus_{B}K$ for all $b\in B$.}\]
	Note that if we identify $k\in \bigoplus_{B}\left(\bigoplus_{B}K\right)$ with an element $(k_{b,\beta})_{b,\beta\in B}$ in $\bigoplus_{B\oplus B}K,$ then
	\[\alpha_{\pi}((k_{b,\beta}))=(k_{\pi^{-1}(b),\beta}).\]

\begin{notation} \label{not:romans vs greeks}
	When we encounter sets of the form $\bigoplus_B (\bigoplus_B A)$, we will use roman subscripts to identify the outer index, and greek subscripts to identify the inner index.
	For example, for $a = (a_b)_{b\in B}$, we have each $a_b \in \bigoplus_B A$, expressed as $a_b = (a_{b,\beta})_{\beta \in B}$.
\end{notation}

Given the maps $\theta \colon G \to K$ and $\sigma \colon H \to \Sym(B)$
 and a finite subset $E$ of $H$
 we define 
\[\Theta : G\wr H \to \bigoplus_B K \wr_B \Sym(B)\]
by
$\Theta(g,h)=(\theta_{B}(g),\sigma(h))$,
where $\theta_B$ is a map we proceed to define below.

We use the finite subset $E\subset H$ to define a subset $B_E$ of $B$, given as the intersection $B=B_1\cap B_2$, where 
\[B_{1}=\{b\in B:\sigma(h_{1})\m b\ne \sigma(h_{2})\m b\mbox{ for all $h_{1},h_{2}\in E,h_{1}\ne h_{2}$}\},\]
\[B_{2}=\{b\in B:\sigma(h_{1}h_{2})\m b=\sigma(h_{2})\m\sigma(h_{1})\m b\mbox{ for all $h_{1},h_{2}\in E$}\}.\]

If we consider an element of $\bigoplus_B(\bigoplus_B K)$ as a function $B\times B \to K$, where we follow the convention of Notation \ref{not:romans vs greeks}, then $\theta_B$ is the map defined by
\[(g_h)_{h\in H} \mapsto \begin{cases}
(b,\beta) \mapsto \theta(g_{h_0}), & \textrm{if $b\in B_{E},$ if $g_{h}=1$ for all $h \in H\setminus E,$} \\ & \textrm{
	and $h_{0}\in H$ is so that $b=\sigma(h_0)\beta$;} \\
\vspace{-3mm}&\\
(b,\beta)\mapsto 1, & \textrm{otherwise.}
\end{cases}\]

Referring back to the intuition of Section \ref{sec:intuition}, we explain what happens when we fix $\beta$, the second coordinate in $B\times B$.
This coordinate is the inner index for an element of $\bigoplus_B (\bigoplus_B K)$, and corresponds to a choice of a copy of $K$ in the wreath product $\bigoplus_B K \wr_B \Sym(B)$.
If $\beta\in \bigcap_{x\in E}\sigma(x)^{-1}B_{E}$,
then $\theta_B$ restricts to a map that sends $(g_x)_{x\in E}$ to an element of $\bigoplus_B K$, where for each $x\in E$, the $\sigma(x)\beta$--coordinate is given by $\theta(g_x)$, and coordinates not of the form $\sigma(x)\beta$ for $x\in E$ are trivial.
Thus, we can think of $\beta$ as behaving as the chosen ``identity vertex'' in $B$.
If an element $(g,h)$ of $G\wr H$ is a journey through $H$, starting at $1$ and picking up elements of $G$ en route to $h$,
then under $\Theta$ this turns into a collection of journeys through $B$, each starting at a suitable choice of $\beta$, and finishing at $\sigma(h)\beta$.  
The map $\theta_B$ tells you what elements of $K$ to pick up along the way.
If the original journey visited a vertex $x\in E$, then the image journey starting at $\beta$ will pick up $\theta(g_x)$ at $\sigma(x)\beta$.
This is visualized in Figure \ref{fig:intuition}

\begin{figure}[h!]
	\labellist
	\footnotesize
	\pinlabel $H$ at 120 300
	\pinlabel $E$ at 160 220
	\pinlabel $1$ at 122 178
	\pinlabel $h$ at 122 228
	\pinlabel $x$ at 98 120
	\pinlabel $B$ at 415 210
	\pinlabel $\beta$ at 384 178
	\pinlabel $\sigma(h)\beta$ at 384 228
	\pinlabel $\sigma(x)\beta$ at 355 113
	\endlabellist
	\includegraphics[width = 12cm]{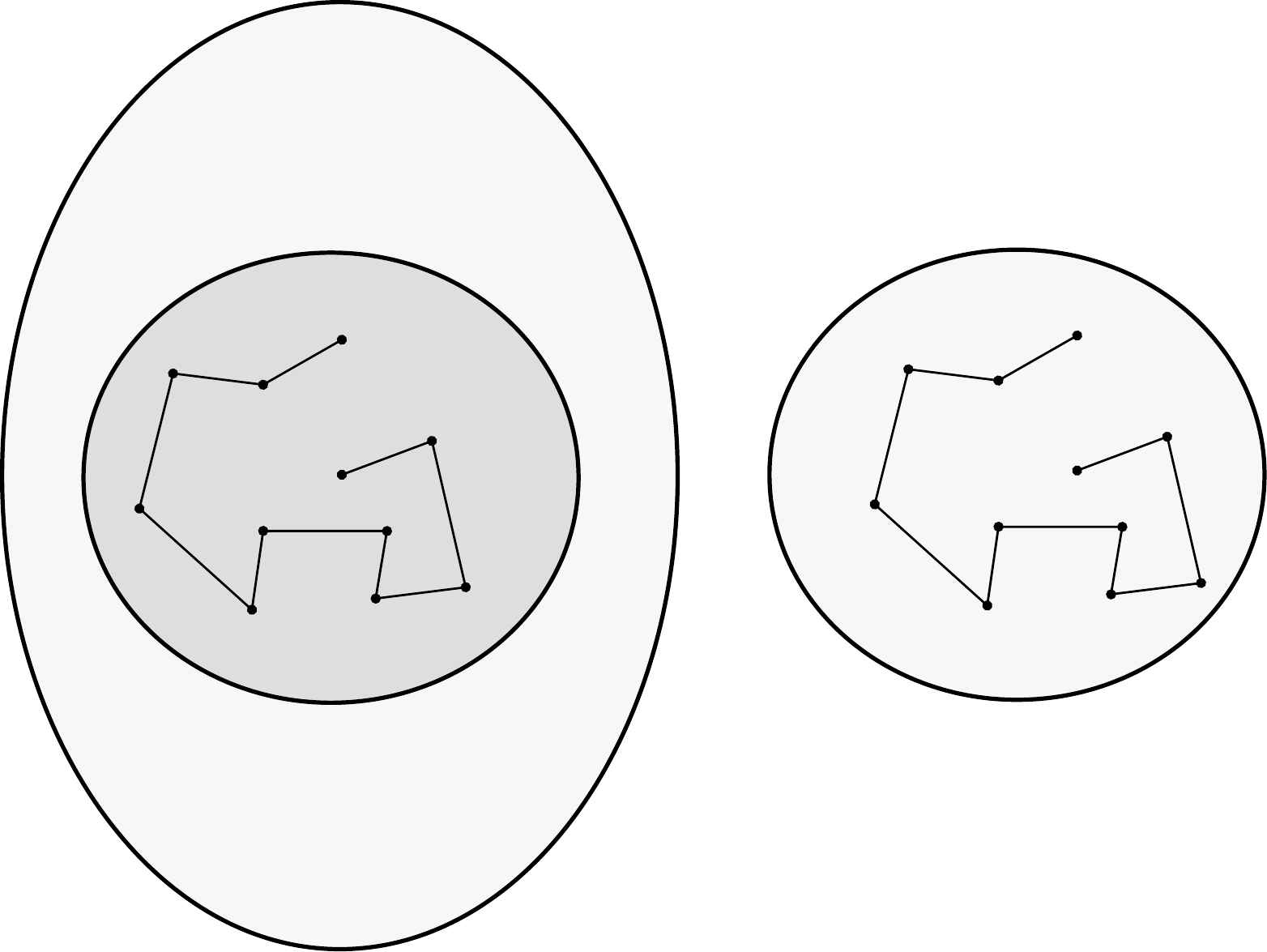}
	\caption{\label{fig:intuition} The journey through $H$ corresponding to $(g,h)$ on the left, when $g\in \bigoplus_E H$, and the journey in the image of $\Theta$ corresponding to choosing $\beta$ to play the role of the identity. The image $\Theta(g,h)$ will be made up of multiple such journeys, one for each suitable choice of $\beta$.}
\end{figure}

We now give an equivalent definition of $\theta_B$.
This is necessary in order to establish further maps and notation which will be used later on.
For $h\in H,b\in B,$ define
\[\theta_{b}^{(h)}\colon G\to \bigoplus_{B}K\]
by $\theta_{b}^{(h)}(g)=(k_{\beta})_{\beta\in B}$ where
\[k_{\beta}=\begin{cases}
\theta(g),&\textnormal{ if $\beta=\sigma(h)\m b$},\\
1,&\textnormal{ otherwise.}
\end{cases}\]

Note that $\theta_{b}^{(h_{1})}(g_{1})$ and $\theta_{b}^{(h_{2})}(g_{2})$ 
commute if $b\in B_1,h_{1},h_{2}\in E,g_{1},g_{2}\in G$ and $h_{1}\ne h_{2}.$ 
Thus it makes sense to define, for $b\in B_E,$ 
\[\theta_{b}\colon \bigoplus_{E}G\to \bigoplus_{B}K\]
by
\[\theta_{b}\Big((g_{h})_{h\in E}\Big)=\prod_{h\in E}\theta^{(h)}_{b}(g_{h}).\]
In our applications $\sigma$ will be a sofic approximation, so we can think of $B_E$ as making up the majority of $B$. 
Thus $\theta_{b}$ will be defined for ``most" $b\in B.$  
We extend $\theta_{b}$ to be defined for all $b\in B$ by saying that 
$\theta_{b}$ maps everything to the identity for $b\in B\setminus B_E$. 

Relating this to the intuition described above, for $g\in \bigoplus_E G$ and $\beta\in B$, the $\beta$--coordinate of $\theta_b(g)$ tells  you what element of $K$ to pick up at $b$ if $\beta$ is chosen as the ``identity vertex.''

We then obtain our equivalent definition of $\theta_B\colon \bigoplus_{E}G\to \bigoplus_{B}\left(\bigoplus_{B}K\right)$
by packaging all these maps together as a single map
$$\theta_{B}(g)=\big(\theta_{b}(g)\big)_{b\in B}$$
and extending $\theta_{B}$ to $\bigoplus_{H}G$ by declaring that $\theta_{B}(g)=1$ if $g\in \bigoplus_{H}G,$ but $g\notin \bigoplus_{E}G.$

\medskip

We will prove that  if $K$ has a bi-invariant metric and $\theta$ and $\sigma$ are almost multiplicative and almost injective, then $\Theta$ gives us our desired almost multiplicative and almost injective map.
To do this, we need to use an appropriate bi-invariant length function on $\left(\bigoplus_{B}K\right)\wr_B \Sym(B).$ 

\begin{notation} \label{notn:length f unctions}
Let $\ell'$ be a conjugacy-invariant length function on $\bigoplus_B K$ such that $\ell'\leq 1$.
Take $\tilde\ell$ to be the conjugacy-invariant length function on $\left(\bigoplus_{B}K\right)\wr_B \Sym(B)$ defined in Proposition \ref{P:conjinvlengthfunc}. 
When $\ell'$ is the specific length function $\ell_{\max}$ defined on $\bigoplus_{B}K$ by
\[\ell_{\max}\big((k_{b})_{b\in B}\big)=\max_{b\in B}\ell(k_{b}),\]
where $\ell$ is a given length function on $K$ such that $\ell\leq 1$,
we denote the length function obtained from Proposition \ref{P:conjinvlengthfunc} by $\tilde \ell_{\max}$, to emphasise the specific choice of $\ell'$.

In summary, we will be dealing with the following length functions, with corresponding metrics:
\begin{itemize} 
	\item  $\ell$ on $K$, corresponding to $d$, and such that $\ell\leq 1$;
	\item $\ell'$ on  $\bigoplus_B K$, corresponding to $d'$, and such that $\ell'\leq 1$;
	\item $\ell_{\max}$ on $\bigoplus_B K$---we never refer to the corresponding metric;
	\item $\tilde \ell$ on $\left(\bigoplus_{B}K\right)\wr_B \Sym(B)$, corresponding to $\tilde d$;
	\item $\tilde{\ell}_{\max}$ on $\left(\bigoplus_{B}K\right)\wr_B \Sym(B)$, corresponding to $\tilde d_{\max}$.
\end{itemize}
\end{notation}

Our aim is to prove the following.

\begin{prop}\label{prop:metric approx}
	Let $F\subseteq G\wr H$ be finite and $\varepsilon>0.$ 
There are finite subsets $E_G\subseteq G$ and $E,E_H\subseteq H$, and  an $\varepsilon'>0$ with the following properties.
Let
\begin{itemize} 
	\item $\sigma\colon H\to \Sym(B)$ be an $(E_H,\varepsilon')$--sofic approximation,
  	\item $\theta : G \to K$ be a map,
  	\item $\ell$, $d$, $\ell'$, $d'$, $\tilde \ell$, $\tilde d$, $\tilde \ell_{\max}$, and $\tilde d_{\max}$ be as described in Notation \ref{notn:length f unctions}.
\end{itemize}
  	Then $\Theta : G\wr H \to \left(\bigoplus_B K\right) \wr_B \Sym(B)$, as constructed above using $E$, $\theta$ and $\sigma$, has the following properties. 
\begin{enumerate}[(a)]
\item Suppose the length function $\ell'$ on $\bigoplus_B K$  restricts to $\ell$ on each copy of $K$.

If $\theta\colon G\to K$ is $(E_G,\varepsilon',d)$--multiplicative,
then $\Theta$ is $(F,\varepsilon,\tilde d)$--multiplicative.

\item
Let  $c$ be a map $c\colon G\setminus\{1\}\to (0,\infty)$. Define  $c'\colon (G\wr H)\setminus \{1\}\to (0,\infty)$ by
\[c'(g,h)=\begin{cases}
\frac{1}{2}, &\textnormal{ if $h\ne 1$}\\
\max\limits_{x\in \supp(g)}\frac{1}{2}c({g_{x}}), &\textnormal{ if $h=1,$ $g=(g_{x})_{x\in H}.$}\\
\end{cases}\]
Then, if  $\theta\colon G\to K$ is $(E_G,c,d)$--injective
then $\Theta$ is $(F,c',\tilde d_{\max})$--injective.
\end{enumerate}
\end{prop}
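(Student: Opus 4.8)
The plan is to prove the two parts separately: the multiplicativity statement (a) I would reduce to Lemma \ref{l:almosthomom}, while the injectivity statement (b) I would handle by a direct estimate. First I fix the finite data attached to $F$. Set $E_2=\proj_H(F)$ and $E_1=\{\alpha_h(g):h\in E_2\cup\{1\},\ g\in\proj_G(F)\}$, exactly the sets from Lemma \ref{l:almosthomom}. Let $E_G\subseteq G$ be the finite set of all nontrivial coordinates $g_x$ of elements $g\in E_1$; let $E\subseteq H$ be a finite set containing $\supp(\alpha_h(g))=h\cdot\supp(g)$ for every $g\in E_1$ and $h\in E_2\cup\{1\}$, so that every element we shift stays supported in $E$; and let $E_H\subseteq H$ contain $E$, $E_2$, and all $h_1h_2$ and $h_2h_1^{-1}$ with $h_1,h_2\in E$. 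Finally $\varepsilon'>0$ is chosen at the very end, once the finitely many constants below (depending only on $|E|$, $|E_1|$ and the maximal support size $M$, not on $B$) are fixed.

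For part (a) I would apply Lemma \ref{l:almosthomom} with target group $(\bigoplus_B K)\wr_B\Sym(B)$, metric $\tilde d$, and $F_0=F$, so it suffices to verify its four bullet conditions with error $\varepsilon/6$. Two are essentially free: since $\sigma(1)=\id$ one has $\Theta(g,1)=(\theta_B(g),\id)$, $\Theta(1,h)=(1,\sigma(h))$, and $\Theta(g,1)\Theta(1,h)=(\theta_B(g),\sigma(h))=\Theta(g,h)$, so the third bullet holds with error $0$; the restriction to $H$ lives in the permutation coordinate, where $\tilde d$ collapses to $d_{\Hamm}(\sigma(hh'),\sigma(h)\sigma(h'))<\varepsilon'$ by sofic multiplicativity. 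For the restriction to $\bigoplus_H G$ the permutation coordinates are trivial, so $\tilde d(\Theta(gg',1),\Theta(g,1)\Theta(g',1))$ is the $B$-average of $\ell'((\theta_b(g)\theta_b(g'))^{-1}\theta_b(gg'))$; on $B_E\subseteq B_1$ this element is supported on the distinct indices $\sigma(x)^{-1}b$ and equals $(\theta(g_x)\theta(g'_x))^{-1}\theta(g_xg'_x)$ there, so the triangle inequality together with the hypothesis that $\ell'$ restricts to $\ell$ on each copy of $K$ bounds it by $\sum_x \ell((\theta(g_x)\theta(g'_x))^{-1}\theta(g_xg'_x))<M\varepsilon'$, while outside $B_E$ the term vanishes.

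The main obstacle is the fourth (equivariance) condition, where the whole construction is genuinely tested. Expanding the wreath-product multiplication, both $\Theta(1,h)\Theta(g,1)$ and $\Theta(\alpha_h(g),1)\Theta(1,h)$ have permutation coordinate $\sigma(h)$, and the distance between them reduces to
\[\frac{1}{|B|}\sum_{b\in B}\ell'\big(\theta_{\sigma(h)b}(\alpha_h(g))^{-1}\,\theta_b(g)\big).\]
The crux is to show $\theta_{\sigma(h)b}(\alpha_h(g))=\theta_b(g)$ for all but a small fraction of $b$. Writing out both products and re-indexing the left one by $x=hy$, the $\theta(g_y)$--term sits at index $\sigma(hy)^{-1}\sigma(h)b$ on the left and at $\sigma(y)^{-1}b$ on the right; after the substitution $c=\sigma(y)^{-1}b$ these agree precisely when $\sigma(h)\sigma(y)c=\sigma(hy)c$, i.e. exactly where $\sigma$ is genuinely multiplicative at $(h,y)$. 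I would therefore bound the bad set of $b$ by the union of $B\setminus B_E$, $\sigma(h)^{-1}(B\setminus B_E)$, and, for each $y\in\supp(g)$, the non-multiplicativity set of $(h,y)$; since $\sigma$ is an $(E_H,\varepsilon')$--sofic approximation each has density $O(\varepsilon')$ (for $B\setminus B_1$ one passes through freeness applied to $h_2h_1^{-1}\in E_H$, and for $B\setminus B_2$ and the matching sets through almost-multiplicativity, both using bi-invariance of $d_{\Hamm}$). On the good $b$ the integrand is $0$ and on the bad $b$ at most $1$, so the average is $O(\varepsilon')<\varepsilon/6$ once $\varepsilon'$ is small. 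This closes (a) via the Lemma.

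For part (b) I would compute $\tilde\ell_{\max}(\Theta(g,h))=\ell_{\Hamm}(\sigma(h))+\frac{1}{|B|}\sum_{\sigma(h)b=b}\ell_{\max}(\theta_b(g))$ and split on $h$. If $h\neq1$, freeness gives $\ell_{\Hamm}(\sigma(h))>1-\varepsilon'\ge\tfrac12=c'(g,h)$. If $h=1$ and $g\neq1$, pick $x_0\in\supp(g)$ realizing $c'(g,1)=\tfrac12 c(g_{x_0})$; for every $b\in B_E$ the coordinate of $\theta_b(g)$ at index $\sigma(x_0)^{-1}b$ is exactly $\theta(g_{x_0})$, so $\ell_{\max}(\theta_b(g))\ge\ell(\theta(g_{x_0}))\ge c(g_{x_0})$ by $(E_G,c,d)$--injectivity, and averaging gives at least $\frac{|B_E|}{|B|}c(g_{x_0})\ge\tfrac12 c(g_{x_0})$ once $\varepsilon'$ is small enough that $|B\setminus B_E|\le\tfrac12|B|$. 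It is precisely here that the max-length $\ell_{\max}$, rather than an averaged $\ell'$, is indispensable: a single surviving inner coordinate must contribute its full length, which is why (b) is stated for $\tilde d_{\max}$.
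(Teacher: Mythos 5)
Your proposal is correct and follows essentially the same route as the paper: part (a) is reduced to Lemma \ref{l:almosthomom} with the equivariance bullet handled by showing $\theta_{\sigma(h)b}(\alpha_h(g))=\theta_b(g)$ off a set of $b$'s of density $O(\varepsilon')$, and part (b) is a direct estimate on $\tilde\ell_{\max}$ splitting on whether $h=1$. The only deviations are cosmetic (you bound the bad set in the equivariance step directly by per-pair non-multiplicativity sets rather than routing through $B_2$, and in (b) you drop the nonnegative off-$B_E$ terms instead of subtracting $\kappa$), and both are valid.
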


The remainder of this section is dedicating to proving Proposition \ref{prop:metric approx}.
We will see below that, once $E$ is given, the following upper bounds on $\varepsilon'$ are sufficient:
\begin{equation}\label{eq:epsilon' bounds}\begin{array}{ll} \textrm{for (a):} & \varepsilon' < \frac{\varepsilon}{48|E|^{2}},   \\ 
\textrm{for (b):} & \varepsilon' < \frac{1}{16\modulus{E}^2}\min\big\{{c(g)},{1} \mid g\in E_G\setminus\{1\} \big\}. \end{array}\end{equation}
As we see, the bounds on $\varepsilon'$ depend only on $\varepsilon$ and the set $F$.

{We remark that $\Theta(1,1)=1$ by construction.}
We first explain how to define the sets $E$, $E_G$ and $E_H$.

Let $F\subseteq G\wr H$ be finite and $\varepsilon>0$.
Define projections $\proj_{G}\colon G\wr H\to \bigoplus_{H}G$ and $\proj_{H}\colon G\wr H\to H$ by $\proj_{G}(g,h)=g$ and $\proj_{H}(g,h)=h$.
Let $E_{1},E_{2}$ be as in Lemma \ref{l:almosthomom} for the finite set $F_0=F\cup \{1\} \cup F\m$:
		\[E_1 = \big\{ \alpha_h (g) : h\in \proj_H(F_0) , g\in \proj_G(F_0) \big\} \subseteq \bigoplus_E G, \ \ 
		E_2 = \proj_H(F_0) \subseteq H.\]
Recall that for $g=(g_{x})_{x\in H}\in \bigoplus_{H}G$ the support of $g,$ denoted $\supp(g),$ is the set of $x\in H$ with $g_{x}\ne 1.$  We set
\[E = E_2 \cup \bigcup_{\substack{g\in E_1\\h\in E_2}}h\supp(g),\ \ 
E_G = \big\{ g_x \in G : (g_x) \in E_1, x\in H \big\},\ \  
E_H=E\m E .\]
Since $E_2$ contains the identity, it follows that $E$ and $E^{-1}$ are both subsets of $E_H$.

Let $K$ be as in Proposition \ref{prop:metric approx}  %a group with a conjugacy-invariant length function $\ell$ 
and let $\theta\colon G\to K$ be $(E_{G},\varepsilon',d)$--multiplicative and $(E_{G},c,d)$--injective,  where $\varepsilon'$ is controlled by the bounds in \eqref{eq:epsilon' bounds} above.
Let $ \sigma\colon H\to \Sym(B)$ be a $(E_H,\varepsilon')$--sofic approximation.
Recall the set $B_E$ is defined from $E$ as the intersections of sets $B_1,B_2$ (which depend only on $E$).
Lemma \ref{lem:prelim obs} confirms that, since $\sigma$ is a sofic approximation, $B_E$ makes up a significant proportion of the set $B$.

\begin{lem}   \label{lem:prelim obs}
	Let $\kappa>0$.  If $ \varepsilon' < \frac{\kappa}{4\modulus{E}^2}$ then  $\modulus{B\setminus B_E}\leq \kappa|B|.$
\end{lem}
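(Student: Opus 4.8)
The plan is to bound the size of the complement $B \setminus B_E$ by separately controlling how many points fail to lie in $B_1$ and how many fail to lie in $B_2$, then combining via the union bound $\modulus{B \setminus B_E} \leq \modulus{B \setminus B_1} + \modulus{B \setminus B_2}$. Recall that $\sigma$ being an $(E_H, \varepsilon')$--sofic approximation gives us two things: it is $(E_H, \varepsilon', d_{\Hamm})$--multiplicative, and it is $(E_H, \varepsilon')$--free, meaning $\ell_{\Hamm}(\sigma(g)) > 1 - \varepsilon'$ for all $g \in E_H \setminus \{1\}$. The key observation is that both $B_1$ and $B_2$ are defined by conditions indexed by pairs of elements of $E$, and each individual failure condition can be recast as a statement about $\sigma$ applied to a word in $E_H$.

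First I would handle $B_1$. A point $b \notin B_1$ means there exist distinct $h_1, h_2 \in E$ with $\sigma(h_1)\m b = \sigma(h_2)\m b$, equivalently $\sigma(h_2)\sigma(h_1)\m b = b$. Using almost-multiplicativity to compare $\sigma(h_2)\sigma(h_1)\m$ with $\sigma(h_2 h_1\m)$ — noting $h_2 h_1\m \in E_H$ and is nontrivial since $h_1 \ne h_2$ — together with $(E_H, \varepsilon')$--freeness, the set of $b$ fixed by $\sigma(h_2 h_1\m)$ has size at most $\varepsilon' \modulus{B}$, and the multiplicativity error contributes a further $O(\varepsilon')\modulus{B}$ from the Hamming distance between $\sigma(h_2)\sigma(h_1)\m$ and $\sigma(h_2 h_1\m)$. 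Summing over the at most $\modulus{E}^2$ relevant pairs gives $\modulus{B \setminus B_1} \leq C_1 \modulus{E}^2 \varepsilon' \modulus{B}$ for a small explicit constant $C_1$.

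Next I would treat $B_2$ analogously. A point $b \notin B_2$ means there exist $h_1, h_2 \in E$ with $\sigma(h_1 h_2)\m b \ne \sigma(h_2)\m \sigma(h_1)\m b$, which is precisely a point where the two permutations $\sigma(h_1 h_2)\m$ and $(\sigma(h_1)\sigma(h_2))\m = \sigma(h_2)\m\sigma(h_1)\m$ disagree. The number of such $b$ is $\modulus{B} \cdot d_{\Hamm}\big(\sigma(h_1 h_2), \sigma(h_1)\sigma(h_2)\big)$, which is at most $\varepsilon' \modulus{B}$ directly by almost-multiplicativity of $\sigma$ (using $h_1, h_2 \in E \subseteq E_H$). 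Summing over the at most $\modulus{E}^2$ pairs gives $\modulus{B \setminus B_2} \leq \modulus{E}^2 \varepsilon' \modulus{B}$. Combining the two bounds yields $\modulus{B \setminus B_E} \leq C \modulus{E}^2 \varepsilon' \modulus{B}$, and the hypothesis $\varepsilon' < \frac{\kappa}{4\modulus{E}^2}$ is precisely calibrated so that this is at most $\kappa \modulus{B}$, provided the accumulated constant $C$ is at most $4$.

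The main obstacle I anticipate is bookkeeping the constants correctly so that they fit under the stated threshold $\frac{\kappa}{4\modulus{E}^2}$: one must be careful that replacing $\sigma(h_2)\sigma(h_1)\m$ by $\sigma(h_2 h_1\m)$ in the $B_1$ estimate does not introduce extra factors beyond what the bound $4\modulus{E}^2$ absorbs. Concretely, translating the fixed-point condition through the multiplicativity error requires comparing $\ell_{\Hamm}$ of a product against $\ell_{\Hamm}$ of its almost-multiplicative approximant, and the triangle inequality for the Hamming length must be applied so that the freeness bound and the multiplicativity bound each contribute at most $\varepsilon'$; keeping the total constant at or below $4$ (rather than, say, needing $6$ or $8$) is the delicate point.
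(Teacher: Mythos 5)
Your proposal is correct and follows essentially the same route as the paper: a union bound over $B\setminus B_1$ and $B\setminus B_2$, where each bad pair for $B_1$ costs $3\varepsilon'$ (one $\varepsilon'$ each from freeness, from comparing $\sigma(h)^{-1}$ with $\sigma(h^{-1})$ via multiplicativity and bi-invariance, and from the multiplicativity of the product $\sigma(h_1)\sigma(h_2^{-1})$ versus $\sigma(h_1h_2^{-1})$), and each bad pair for $B_2$ costs $\varepsilon'$. The accumulated constant you flagged as the delicate point does come out to exactly $3+1=4$, so the threshold $\varepsilon'<\frac{\kappa}{4\modulus{E}^2}$ suffices as you anticipated.
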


\begin{proof}
	Note that
\[B\setminus B_{1}=\bigcup_{\substack{h_{1},h_{2}\in E\\h_1\ne h_2}}\{b\in B:\sigma(h_{1})\m b= \sigma(h_{2})\m b\}.\]
Since $E_{H}\supseteq E\cup E^{-1},$ by $(E_{H},\varepsilon')$--soficity of $\sigma$ we have $d_{\Hamm}(\sigma(h_{2})^{-1},\sigma(h_{2}^{-1}))<\varepsilon'.$ 
Thus for $h_{1}\ne h_{2},$ we have
\begin{eqnarray*}\frac{\modulus{\{b\in B:\sigma(h_{1})\m b=\sigma(h_{2})\m b\}}}{\modulus{B}}&=&d_{\Hamm}(\sigma(h_{1})^{-1},\sigma(h_{2})^{-1})\\
&=& 1-\ell_{\Hamm}(\sigma(h_{1})\sigma(h_{2})^{-1})\\
&< & 1-\ell_{\Hamm}(\sigma(h_{1})\sigma(h_{2}^{-1}))+\varepsilon'\\
&\leq & 1- \ell_{\Hamm}(\sigma(h_{1}h_{2}^{-1}))+2\varepsilon'\\
&< & 3\varepsilon',
\end{eqnarray*}
where in the last two lines we again use that  $E_H\supseteq E\cup E\m\cup E\m E$.  Thus
\[\frac{\modulus{B\setminus B_{1}}}{\modulus{B}}\leq 3\modulus{E}^{2}\varepsilon'.\]
Similarly, $(E_H,\varepsilon',d_{\Hamm})$--multiplicativity of $\sigma$ gives
\[\frac{\modulus{B\setminus B_{2}}}{\modulus{B}}\leq \sum_{h_{1},h_{2}\in E}\Big(1-d_{\Hamm}\big(\sigma(h_{1}h_{2}),\sigma(h_{1})\sigma(h_{2})\big)\Big)\leq \modulus{E}^{2}\varepsilon'.\]
This proves the lemma.
\end{proof}

Use the set  $E \subset H$ and the maps $\theta,\sigma$ to define the maps $\theta_{B},\Theta$, as constructed at the start of this section. 

\subsection{Part (a) of Proposition \ref{prop:metric approx}}

We claim that if $\varepsilon'$ is sufficiently small, then the map $\Theta$ is $(F,\varepsilon,\tilde d)$--multiplicative.

Take $\kappa>0$ so that $\kappa < \frac{\varepsilon}{12}$, and take  $\varepsilon'>0$ satisfying the hypothesis of Lemma \ref{lem:prelim obs}, so we will  have $\varepsilon' <\frac{\varepsilon}{48|E|^{2}}$.

We now apply Lemma \ref{l:almosthomom}, verifying below the four necessary conditions to show that $\Theta$ is $(F,\varepsilon,\tilde d)$--multiplicative.
 We first check that it is $(E_1,\varepsilon/6,\tilde d)$--multiplicative when restricted to $\bigoplus_H G$. 
Recall that throughout Proposition \ref{prop:metric approx} we assume that $\ell\leq 1$ and $\ell'\leq 1$, while in part (a) we assume furthermore that $\ell'$ restricts to $\ell$ on each copy of $K$. Let $g,g'\in E_{1}$ with $g=(g_{x})_{x\in H},g'=(g'_{x})_{x\in H}$.
Since $E_1\subset \bigoplus_E G$, we may apply $\theta_b$ to $g$, $g'$, and $gg'$.
 Then
\begin{align*}
\tilde{d}(\theta_{B}(g)\theta_{B}(g'),\theta_{B}(gg'))&=\frac{1}{|B|}\sum_{b\in B}d'\big( \theta_{b}(g)\theta_{b}(g'), \theta_{b}(gg')\big)\\
&\leq \kappa+\frac{1}{|B|}\sum_{b\in B_E}d'\big(\theta_{b}(g)\theta_{b}(g'), \theta_{b}(gg')\big).
\end{align*}

By the definitions of $\theta_b$ and of $E_1$, we realise that each component of
$\theta_{b}(gg')\m \theta_{b}(g)\theta_{b}(g')$ is either $1$ or
$\theta(g_{x}g_{x}')^{-1}\theta(g_{x})\theta(g_{x}')$, for $x\in E$. 
Thus 
\begin{align*}
\tilde{d}(\theta_{B}(g)\theta_{B}(g'),\theta_{B}(gg'))&\leq \kappa+\frac{1}{|B|}\sum_{b\in B_E}\sum_{x\in E}d\big(\theta(g_{x})\theta(g_{x}'), \theta(g_{x}g_{x}')\big)\\
&\leq \kappa+\modulus{E}\varepsilon',
\end{align*}
where in the last line we use that $\theta$ is $(E_G,\varepsilon',d)$--multiplicative. Since $\kappa+\modulus{E}\varepsilon'<\frac{\varepsilon}{6},$ we see that $\Theta$ is $(E_{1},\varepsilon/6,\tilde d)$--multiplicative.

The fact that the restriction to $H$ is $(E_2,\varepsilon/6,\tilde d)$--multiplicative is more straightforward.
Indeed, for $h,h'\in E_{2}$ we have
\begin{equation*}
\tilde{d}(\sigma(hh'),\sigma(h)\sigma(h'))=d_{\Hamm}(\sigma(hh'),\sigma(h)\sigma(h'))<\varepsilon',
\end{equation*}
where we note that we can use the multiplicative property of $\sigma$ since $E_2 \subseteq E_H$.

 By construction, the third condition of Lemma \ref{l:almosthomom}, bounding the distance between $\Theta(g,h)$ and $\Theta(g,1)\Theta(1,h)$, is automatically satisfied by $\Theta$, since these elements are equal.

We finish part (a) by verifying the bound on
$\tilde{d}\big(\Theta(1,h)\Theta(g,1) , \Theta(\alpha_h(g),1)\Theta(1,h)\big)$  
for $g\in E_{1},h\in E_{2}.$ We have
\begin{align*}
\tilde{d}\big(\Theta(1,h)\Theta(g,1) , \Theta(\alpha_h(g),1)\Theta(1,h)\big)
&=\tilde{d}\big((\alpha_{\sigma(h)}(\theta_{B}(g)),\sigma(h)),(\theta_{B}(\alpha_{h}(g)),\sigma(h))\big)\\
&=\frac{1}{|B|}\sum_{b\in B}d'\big( \theta_{\sigma(h)^{-1}b}(g),  \theta_{b}(\alpha_{h}(g)\big)\\
&=\frac{1}{|B|}\sum_{b\in B}d'\big( \theta_{b}(g), \theta_{\sigma(h)b}(\alpha_{h}(g))\big).
\end{align*}
 Using Lemma \ref{lem:prelim obs}, and that $\ell' \leq 1$, we can disregard what happens for $b$ outside of both $B_E$ and $\sigma(h)\m B_E$ for a controlled cost. This gives us the following upper bound for the above distance:
$$ 2\kappa+\frac{1}{|B|}\sum_{b\in B_E\cap \sigma(h)\m B_E}d'\big(\theta_{b}(g), \theta_{\sigma(h)b}(\alpha_{h}(g))\big).$$
Since $\supp(\alpha_{h}(g))=h\supp(g)$, and $E$ contains both $\Supp(g)$ and $h\supp(g)$, it follows that for every $b\in B_E\cap \sigma(h)^{-1}(B_E)$ we have
\[\theta_{\sigma(h)b}(\alpha_{h}(g))=\prod_{x\in h\supp(g)}\theta_{\sigma(h)b}^{(x)}(g_{h^{-1}x})=\prod_{x\in \supp(g)}\theta_{\sigma(h)b}^{(hx)}(g_{x}).\]
Note that we have used that $\theta(1) = 1$ to restrict the number of terms in the product.
We use that for $h\in E$ (and hence for $h\in E_2$) and $b\in B_E\cap \sigma(h)\m B_E$ we have that $\theta_{\sigma(h)b}^{(hx)}(g)=\theta_{b}^{(x)}(g)$.
Inserting this into the above equation we see that
\[ \theta_{\sigma(h)b}(\alpha_{h}(g))= \prod_{x\in \supp(g)}\theta_{b}^{(x)}(g_{x})= \theta_{b}(g).\]
Returning to the above inequality, we have shown that
\[\frac{1}{|B|}\sum_{b\in B_E\cap \sigma(h)\m B_E}d'(\theta_{b}(g), \theta_{\sigma(h)b}(\alpha_{h}(g)))=0\]
so
\begin{equation*}
\tilde{d}\big(\Theta(1,h)\Theta(g,1) , \Theta(\alpha_h(g),1)\Theta(1,h)\big)
<2\kappa<\frac{\varepsilon}{6}.
\end{equation*}
This completes the proof of part (a) of Proposition \ref{prop:metric approx}.

\subsection{Part (b) of Proposition \ref{prop:metric approx}}

We now show that $\Theta$ is $(F,c',\tilde d_{\max})$--injective, when the length function $\ell'$ on $\bigoplus_B K $ is $\ell_{\max}$.

In order to get (b) we will need to further restrict the size of $\kappa$ (and hence also of $\varepsilon'$). We take $\kappa$ small enough so that, in addition to having $\kappa < \frac{\varepsilon}{12}$, we also have
		$$\kappa<\frac{1}{4}\min\big\{{c(g)},{1} \mid g\in E_G\setminus\{1\} \big\}.$$

First suppose $(g,h)\in F.$ If $h\ne 1,$ then
\[\tilde{\ell}_{\max}\big((\theta_{B}(g),\sigma(h))\big)\geq \ell_{\Hamm}(\sigma(h))\geq 1-\varepsilon'\geq 1/2=c'(g,h).\]\
We may therefore assume that $h=1.$
Let $g=(g_{x})_{x\in E}$. We then have that
\begin{align*}
\tilde{\ell}_{\max}\big((\theta_{B}(g),1)\big)&=\frac{1}{\modulus{B}}\sum_{b\in B}\ell_{\max}(\theta_{b}(g))\\
&\geq -\kappa+\frac{1}{\modulus{B}}\sum_{b\in B_E}\ell_{\max}(\theta_{b}(g))\end{align*}
using Lemma \ref{lem:prelim obs} to obtain the inequality.
Since for $b\in B_E$ the components of $\theta_b(g)$ are either $1$ or $\theta(g_x)$ for some $x \in E$, we get $\ell_{\max}(\theta_{b}(g)) = \max_{x\in E}\ell(\theta(g_x))$. Hence
\begin{align*}
\tilde{\ell}_{\max}\big((\theta_{B}(g),1)\big)
&\geq -\kappa+\frac{\modulus{B_E}}{\modulus{B}}\max_{x\in E}\ell(\theta(g_{x}))\\
&\geq -\kappa + (1-\kappa) \max_{x\in \Supp(g)}c({g_{x}})
\end{align*}
where the last inequality follows from Lemma \ref{lem:prelim obs} and the fact that $\theta$ is $(E_G,c,d)$--injective.
By the choices of $\kappa$ and $E_G$, we get 
$$-\kappa + (1-\kappa) \max_{x\in \Supp(g)}c({g_{x}}) \geq \frac{-1}{4}\max_{x\in \Supp(g)} c(g_x) + \left(1-\frac{1}{4}\right)\max_{x\in \Supp(g)} c(g_x) = c'(g,1).$$
This verifies that $\Theta$ is $(F,c',\tilde d_{\max})$--injective, and thus completes the proof of Proposition \ref{prop:metric approx}.

\begin{remark}\label{rem:stronger prop}  Our proof can in fact be subtly modified to give a stronger version of Proposition \ref{prop:metric approx}, 
that is reminiscent of the notion of strong discrete $\calC$--approximations of Holt--Rees \cite{HR}.
Namely, for any $\eta>0$ we can improve the conclusion of part (b) to say that $\Theta$ is $(F,c',\tilde d_{\max})$--injective, where $c'$ is given by
\[c'(g,h)=\begin{cases}
(1-\eta), &\textnormal{ if $h\ne 1$}\\
\max\limits_{x\in \supp(g)}(1-\eta)c({g_{x}}), &\textnormal{ if $h=1,$ $g=(g_{x})_{x\in H}.$}\\
\end{cases}\]
For this improved version, the parameters $E,E_H,E_G,\varepsilon'$ will depend upon $\eta.$ We have elected to not give this improved version in order to simplify the statement of the proposition and its proof.
\end{remark}

\section{Applications of Proposition \ref{prop:metric approx}}\label{sec:apps}

In this section, we  use Proposition \ref{prop:metric approx} to prove Theorem \ref{thmspecial:sofic wreath}.
Part (\ref{I:weak}) of Theorem \ref{thmspecial:sofic wreath}  follows immediately from Proposition \ref{prop:metric approx}, so we focus on proving the remaining three parts.
Each of parts (\ref{I:sofic}),(\ref{I:hyper}),(\ref{I:linear}) are proved below in separate subsections. 
We recall that the aim is to show that, for $H$ a countable, discrete, sofic group, the wreath product $G\wr H$ is respectively sofic, hyperlinear, or linear sofic, whenever $G$ is such a group.

\subsection{Proof of Part \eqref{I:sofic}: Sofic}\label{S:sofic}

We restate and prove our soficity result for wreath products.

\begin{thm}\label{thm:sofic}
	 Let $G,H$ be countable, discrete, sofic groups. Then $G\wr H$ is sofic.
\end{thm}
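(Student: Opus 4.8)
The plan is to feed sofic approximations of $G$ and $H$ into Proposition \ref{prop:metric approx} and then to realise the target wreath product $M:=\left(\bigoplus_{B}\Sym(A)\right)\wr_{B}\Sym(B)$ as a genuine permutation group in such a way that the length function $\tilde\ell$ becomes an honest normalized Hamming length. Concretely, I would fix a finite $F\subseteq G\wr H$ and $\delta>0$, fix a small $\eta>0$, and let $E,E_{G},E_{H}$ and $\varepsilon'$ be the data that Proposition \ref{prop:metric approx} (in the strengthened form of Remark \ref{rem:stronger prop}) associates to $F$ and $\varepsilon=\delta$. Since $G$ is sofic I choose an $(E_{G},\varepsilon')$--sofic approximation $\theta\colon G\to\Sym(A)=:K$, which is $(E_{G},\varepsilon',d_{\Hamm})$--multiplicative and $(E_{G},c,d_{\Hamm})$--injective with $c\equiv 1-\varepsilon'$; since $H$ is sofic I choose an $(E_{H},\varepsilon')$--sofic approximation $\sigma\colon H\to\Sym(B)$. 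Setting $\ell=\ell_{\Hamm}$ on $K$, let $\Theta\colon G\wr H\to M$ be the map built from $E,\theta,\sigma$ as in Section \ref{S:construction}.

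The crucial choice will be the length function $\ell'$ on $\bigoplus_{B}K$. I would let $\bigoplus_{B}\Sym(A)$ act \emph{diagonally} on the product set $A^{B}$ by $(m_{\beta})_{\beta}\cdot(a_{\beta})_{\beta}=(m_{\beta}(a_{\beta}))_{\beta}$ and take $\ell'$ to be the resulting normalized Hamming length, so that $\ell'\big((m_{\beta})_{\beta}\big)=1-\prod_{\beta\in B}\big(1-\ell(m_{\beta})\big)$. This is a conjugacy-invariant length function bounded by $1$ that restricts to $\ell$ on each copy of $K$ (so part (a) applies) and that dominates $\ell_{\max}$, because $\prod_{\beta}(1-\ell(m_{\beta}))\leq\min_{\beta}(1-\ell(m_{\beta}))$. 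Now let $M$ act on $D:=A^{B}\times B$ via the associated imprimitive action $\big((m_{b})_{b},\tau\big)\big((a_{\beta})_{\beta},b\big)=\big((m_{\tau(b),\beta}(a_{\beta}))_{\beta},\tau(b)\big)$, and let $\Psi\colon M\to\Sym(D)$ be this (genuine) homomorphism. A direct fixed-point count gives $\ell_{\Hamm}^{D}(\Psi(m))=1-\frac{1}{|B|}\sum_{b:\tau(b)=b}\prod_{\beta}(1-\ell(m_{b,\beta}))$, and since $\ell_{\Hamm}(\tau)+\frac{1}{|B|}|\{b:\tau(b)=b\}|=1$, this is exactly the $\tilde\ell(m)$ of Proposition \ref{P:conjinvlengthfunc} built from the $\ell'$ above. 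In other words $\ell_{\Hamm}^{D}\circ\Psi=\tilde\ell$.

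With these identifications I claim $\Psi\circ\Theta\colon G\wr H\to\Sym(D)$ is the desired sofic approximation. For multiplicativity, part (a) of Proposition \ref{prop:metric approx} gives that $\Theta$ is $(F,\delta,\tilde d)$--multiplicative; because $\Psi$ is a homomorphism and $\ell_{\Hamm}^{D}\circ\Psi=\tilde\ell$, the defect $d_{\Hamm}\big(\Psi\Theta(xy),\Psi\Theta(x)\Psi\Theta(y)\big)$ equals $\tilde d\big(\Theta(xy),\Theta(x)\Theta(y)\big)<\delta$. For freeness, part (b), in the strengthened form of Remark \ref{rem:stronger prop}, gives that $\Theta$ is $(F,c',\tilde d_{\max})$--injective with $c'(g,h)\geq(1-\eta)(1-\varepsilon')$ for $(g,h)\neq 1$; since $\ell'\geq\ell_{\max}$ forces $\tilde\ell\geq\tilde\ell_{\max}$, this upgrades to $(F,c',\tilde d)$--injectivity, whence $\ell_{\Hamm}^{D}(\Psi\Theta(z))=\tilde\ell(\Theta(z))\geq c'(z)$ for every $z\in F\setminus\{1\}$. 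Choosing $\eta$ and $\varepsilon'$ small enough that $(1-\eta)(1-\varepsilon')>1-\delta$ then exhibits $\Psi\Theta$ as an $(F,\delta)$--sofic approximation, proving $G\wr H$ is sofic.

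The main obstacle, and the reason for the particular choice of $\ell'$, is reconciling the metric $\tilde\ell$ on $M$ with an actual Hamming distance. The naive realisation of $\bigoplus_{B}\Sym(A)$ on $A\times B$ yields the \emph{averaged} Hamming length $\frac{1}{|B|}\sum_{\beta}\ell(m_{\beta})$; under that metric the image $\Theta(g,1)$ of a nontrivial base element moves only a $1/|B|$--fraction of $D$, so its length tends to $0$ and freeness is destroyed. Passing instead to the diagonal action on the product $A^{B}$ replaces the average by $1-\prod_{\beta}(1-\ell(m_{\beta}))$, which dominates the maximum $\ell_{\max}$ and therefore retains exactly the freeness supplied by part (b), while remaining the Hamming length of a bona fide permutation action so that almost-multiplicativity transfers automatically through $\Psi$. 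Establishing the elementary but essential facts $\ell_{\max}\leq\ell'\leq 1$, that $\ell'$ restricts to $\ell$ on each copy, and the fixed-point identity $\ell_{\Hamm}^{D}\circ\Psi=\tilde\ell$ is where the real content of the argument lies.
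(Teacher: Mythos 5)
Your proposal is correct and follows essentially the same route as the paper: you choose the same diagonal action of $\bigoplus_B\Sym(A)$ on $A^B$ to define $\ell'$, the same imprimitive embedding $\Psi$ into $\Sym(A^B\times B)$, and the same two facts ($\ell_{\Hamm}\circ\Psi=\tilde\ell$ and $\ell'\geq\ell_{\max}$) to transfer multiplicativity and injectivity. The only (harmless) differences are that you invoke the strengthened version of Proposition \ref{prop:metric approx} from Remark \ref{rem:stronger prop} to get genuine $(F,\delta)$--freeness directly --- the paper settles for $\calC$--approximability with injectivity constant $1/4$ and defers the stronger statement to a remark --- and that your choice $c\equiv 1-\varepsilon'$ creates a slight circularity with the bound on $\varepsilon'$ in the proposition, which is avoided by taking $c\equiv 1/2$ as the paper does.
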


\begin{proof}
 In order to show that $G\wr H$ is sofic, we show that $G\wr H$ is $\mathcal{C}$-approximable, where $\mathcal{C}$ is the class of symmetric groups with the normalized Hamming distance.
 To do this we compose the map $\Theta$ from Section \ref{sec:construction subsec} with a second map $\Psi$, as described below.
	
Let $F\subseteq G\wr H$ be finite and $\varepsilon>0.$ Let $E_{G},E,E_{H}$ and $\varepsilon'>0$ be as in Proposition \ref{prop:metric approx} for $F,\varepsilon.$ 
\ Define $c$ on $G\setminus \{1\}$ by  $c(g)=\frac{1}{2}$, and so 
\[c'\colon G\wr H\setminus\{1\}\to (0,1/2]\]
as constructed in Proposition \ref{prop:metric approx},
is either ${1}/{2}$ if $h\ne 1$, or ${1}/{4}$ otherwise.

 Since $G,H$ are sofic we can find corresponding sofic approximations.
For $H$ we take $\sigma \colon H \to \Sym(B)$, for a finite set $B$, to be an $(E_H,\varepsilon')$--sofic approximation; 
for $G$ we take $\theta \colon G \to \Sym(A)$, for a finite set $A$, to be an $(E_G,\varepsilon')$--sofic approximation.
Note that, since $\varepsilon' < {1}/{2}$ (see \eqref{eq:epsilon' bounds} following Proposition \ref{prop:metric approx}), the $(E_G,\varepsilon')$--free condition of $\theta$ implies that it is $(E_G,c,d_{\Hamm})$--injective.

With these maps, let $\Theta\colon G\wr H\to\left(\bigoplus_{B}\Sym(A)\right)\wr_{B} \Sym(B)$ be the map constructed in Section \ref{S:construction},  with $K=\Sym(A)$. 
We now explain how we embed $\left(\bigoplus_{B}\Sym(A)\right)\wr_{B} \Sym(B)$ into $\Sym\left(\bigoplus_{B}A\oplus B\right)$.
First, define
\[\Phi\colon \bigoplus_{B}\Sym(A)\to \Sym\left(\bigoplus_{B}A\right)\]
by the diagonal action
\[\Phi((\pi_{\beta})_{\beta\in B}):(a_{\beta})_{\beta\in B}\mapsto (\pi_{\beta}(a_{\beta}))_{\beta\in B}, \ \ \textrm{for $\pi_\beta \in \Sym(A), (a_\beta)_{\beta\in B} \in \bigoplus_B A$.}\]
Then, use $\Phi$ to define the embedding
\[\Psi\colon \left(\bigoplus_{B}\Sym(A)\right)\wr_{B}\Sym(B)\to \Sym\left(\bigoplus_{B}A\oplus B\right)\]
by
$$\Psi(\pi,\tau)\colon (a,b)\mapsto (\Phi(\pi_{\tau(b)})(a),\tau(b))$$
for $\pi \in \bigoplus_{B}\left(\bigoplus_B\Sym(A)\right)$, $\tau\in\Sym(B)$, $a \in \bigoplus_B A$, and $b\in B$. A routine computation reveals that $\Psi$ is a homomorphism.

Let $\ell',\ell_{\max}$ be the  conjugacy-invariant length functions on $\bigoplus_{B}\Sym(A)$ given by
\[\ell'(\pi)=\ell_{\Hamm}(\Phi(\pi)),\] 
\[\ell_{\max}(\pi)=\max_{\beta\in B}\ell_{\Hamm}(\pi_{\beta})\]
for $\pi = (\pi_\beta)_{\beta\in B} \in \bigoplus_B\Sym(A)$.
Then take $\tilde{d},\tilde{d}_{\max}$ to be the bi-invariant metrics as constructed in Proposition \ref{P:conjinvlengthfunc} from the length functions $\ell',\ell_{\max}$ on $\bigoplus_B \Sym(A)$.

	Because $\Psi$ is a homomorphism,  for $\pi_{1},\pi_2\in \bigoplus_{B}\left(\bigoplus_{B}\Sym(A)\right),\tau_{1},\tau_2\in \Sym(B)$, we have:
\begin{equation}\label{eq:sofic dhamm}
d_{\Hamm}(\Psi(\pi_{1},\tau_{1}),\Psi(\pi_{2},\tau_{2}))=\tilde{d}((\pi_{1},\tau_{1}),(\pi_{2},\tau_{2})).
\end{equation}
It thus follows directly from Proposition \ref{prop:metric approx} that $\Psi \circ \Theta$ is $(F,\varepsilon,d_{\Hamm})$--multiplicative. 

 We now show that  $\Psi \circ \Theta$ is $(F,c',d_{\Hamm})$--injective. Let $\pi\in\bigoplus_{B}\left(\bigoplus_{B}\Sym(A)\right)$ and $\tau\in \Sym(B).$ 
Write $\pi=(\pi_{b})_{b\in  B}$ for $\pi_b \in \bigoplus_{B}\Sym(A)$ and, for a fixed $b\in B,$ let $\pi_{b}=(\pi_{b,\beta})_{\beta\in B}.$ 
For each $b\in B$ such that $\tau(b)=b$ we then have
\begin{align*}
\ell_{\Hamm}(\Phi(\pi_b)) 
&= 1 - \frac{1}{\modulus{A}^{\modulus{B}}}\modulus{\left\{ (a_\beta)_{\beta\in B} \mid \pi_{b,\beta} a_\beta = a_\beta \right\}} \\
&= 1 - \frac{1}{\modulus{A}^{\modulus{B}}}\prod_{\beta\in B} \modulus{\left\{ a \in A \mid \pi_{b,\beta} a = a \right\}}\\
&= 1 - \prod_{\beta\in B} \big(1- \ell_{\Hamm}(\pi_{b,\beta})\big)
\end{align*}
which implies
\[\tilde{\ell}\big((\pi,\tau)\big)=\ell_{\Hamm}(\tau)+\frac{1}{|B|}\sum_{\substack{b\in B \\ \tau(b) = b}}\left[1-\prod_{\beta\in B}\big(1-\ell_{\Hamm}(\pi_{b,\beta})\big)\right].\]
Since $0\leq \ell_{\Hamm}(\pi_{b,\beta})\leq 1$ we have for each $b\in B:$
\[\prod_{\beta\in B}\big(1-\ell_{\Hamm}(\pi_{b,\beta})\big)\leq 1-\max_{\beta\in B}\ell_{\Hamm}(\pi_{b,\beta})\]
and inserting this into the above expression for $\tilde{d}$ shows that $$\tilde{\ell}((\pi,\tau))\geq \tilde{\ell}_{\max}((\pi,\tau)).$$
Combining equation \eqref{eq:sofic dhamm} with the preceding inequality, we get for each $g\in F\setminus\{1\}$
\[\ell_{\Hamm}\big(\Psi(\Theta(g))\big)=\tilde{\ell}\big(\Theta(g)\big)\geq \tilde{\ell}_{\max}\big(\Theta(g)\big)\geq c'(g)\]
since Proposition \ref{prop:metric approx} implies $\Theta$ is $(F,c',\tilde d_{\max})$--injective.
This shows that $\Psi \circ \Theta$ is $(F,c',d_{\Hamm})$--injective. Hence we have shown that $G\wr H$ is $\mathcal{C}$--approximable, where $\mathcal{C}$ is the class of symmetric groups equipped with the Hamming distance and this means that $G\wr H$ is sofic.
\end{proof}

We remark that one can use the improved version of Proposition \ref{prop:metric approx}, 
as per Remark \ref{rem:stronger prop}, to show that $\Psi \circ \Theta$ as considered in the above proof is an $(F,\varepsilon)$--sofic approximation provided $\theta\colon G\to \Sym(A)$ and $\sigma\colon H\to \Sym(B)$ are sufficiently good sofic approximations. In this way one can in fact directly show that $G\wr H$ has arbitrarily good sofic approximations.

\subsection{Proof of Part \eqref{I:hyper}: Hyperlinear}

In this section, we deduce hyperlinearity of $G\wr H,$ assuming that $G$ is hyperlinear and $H$ is sofic. 
Hyperlinear groups are defined by admitting a metric approximation to unitary groups, $\mathcal{U}(n)$, paired with the normalized Hilbert-Schmidt metric.

Let $\tr\colon M_{n}(\C)\to \C$ be the normalized trace:
\[\tr(A)=\frac{1}{n}\sum_{j=1}^{n}A_{jj}\]
where $A = (A_{ij}) \in M_n(\C)$.

\begin{defn}
	The \emph{normalized Hilbert-Schmidt norm} on $M_{n}(\C)$ is defined by $$\|A\|_{2}=\tr(A^{*}A)^{1/2}, \ \ \textrm{for $A\in M_n(\C)$.}$$ 
	The \emph{normalized Hilbert-Schmidt metric} on $\calU(n)$ is therefore given by 
	$$\dhs(U,V) = \norm{U-V},\ \ \textrm{for $U,V \in \calU(n)$.}$$
	The corresponding length function is denoted $\lhs$.
\end{defn}

\begin{defn}\label{def:hyper}
	We say a group is \emph{hyperlinear} if it is $\mathcal{C}$--approximable, where $\mathcal{C}$ is the class of unitary groups, paired with the normalized Hilbert-Schmidt metrics.
\end{defn}

We will need that our approximations $\theta\colon G\to \mathcal{U}(n)$ not only map $\theta(g)$ far away from $\id$ for $g \ne 1,$ but that in fact $\theta(g)$ is far away from  the unit circle $S^{1}=\{\lambda \id: \modulus{\lambda}=1\}$ in $\calU(n)$. 
To put this in a framework where we can take advantage of Proposition \ref{prop:metric approx}, 
we use the following set-up.

Define $\dbarhs$, a bi-invariant metric on $\calU(n)/S^1$, by
	$$\dbarhs(US^1,VS^1) = \inf_{\lambda\in S^1} \dhs(\lambda U, V),\ \ \textrm{for $U,V \in \calU(n)$.}$$
Let $\lbarhs$ denote the corresponding length function.
We will abuse this notation and write $\dbarhs(U,V)$.
Note that we can directly use the normalized trace to calculate $\lbarhs(U)$ as follows: 
\[\lbarhs(U)^{2}
=\inf_{\lambda\in S^{1}}\norm{U-\lambda\id}^{2}
=\inf_{\lambda\in S^{1}}2-2\rea(\overline{\lambda}\tr(U))
=2-2\modulus{\tr(U)}.\]
In light of this, we get the following reformulation of a result of R\u{a}dulescu in \cite{Radul}  which gives an equivalent definition of hyperlinearity.

\begin{prop}\label{P:projunitary}
Let $G$ be a group and $c\colon G\setminus \{1\}\to (0,\sqrt{2})$ any function.

Then $G$ is hyperlinear if and only if for every $\varepsilon>0$ and any finite $F\subseteq G$ there is {a positive integer $n$ and} a function $\theta\colon G\to \mathcal{U}(n)$ which is $(F,\varepsilon,\dhs)$--multiplicative and so that $q\circ \theta$ is $(F,c,\dbarhs)$--injective, where $q\colon \mathcal{U}(n)\to \mathcal{U}(n)/S^{1}$ is the quotient map. 
\end{prop}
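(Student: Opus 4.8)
The plan is to prove both directions of the equivalence, with the nontrivial content lying in the ``only if'' direction, which is where R\u{a}dulescu's result enters. The ``if'' direction is essentially trivial: if for every $\varepsilon>0$ and finite $F$ we have a map $\theta\colon G\to\calU(n)$ that is $(F,\varepsilon,\dhs)$--multiplicative and with $q\circ\theta$ being $(F,c,\dbarhs)$--injective, then I claim $\theta$ itself already witnesses hyperlinearity. Indeed $(F,\varepsilon,\dhs)$--multiplicativity is exactly what is required, and since $\dbarhs(q(\theta(g)),q(1))\leq \dhs(\theta(g),1)$ always holds (the quotient metric is an infimum over $\lambda\in S^1$, and $\lambda=1$ is one choice), the $(F,c,\dbarhs)$--injectivity of $q\circ\theta$ forces $\dhs(\theta(g),1)\geq c(g)$, i.e. $\theta$ is $(F,c,\dhs)$--injective. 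Thus $G$ is $\calC$--approximable with respect to the class of unitary groups in the Hilbert--Schmidt metric, using the fixed cost function $c$, which is the definition of hyperlinearity.

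For the ``only if'' direction, suppose $G$ is hyperlinear. The naive approach would be to take a Hilbert--Schmidt approximation $\theta_0\colon G\to\calU(n)$ and hope that $q\circ\theta_0$ is automatically bounded away from the identity in $\dbarhs$; but this fails precisely because $\theta_0(g)$ could be close to a nontrivial scalar $\lambda\id$ while staying far from $\id$. This is the main obstacle, and it is exactly the phenomenon flagged in the introduction as the reason tensor products behave badly. The fix is R\u{a}dulescu's theorem, which I would invoke in the following form: hyperlinearity of $G$ allows one to produce approximations whose images stay uniformly far from \emph{all} scalars, not merely from $\id$. Concretely, given $\varepsilon>0$, finite $F$, and the prescribed $c\colon G\setminus\{1\}\to(0,\sqrt2)$, I would apply R\u{a}dulescu's result to obtain $\theta\colon G\to\calU(n)$ that is $(F,\varepsilon,\dhs)$--multiplicative and satisfies $|\tr(\theta(g))|\leq 1-\tfrac12 c(g)^2$ for all $g\in F\setminus\{1\}$ (after rescaling the target bound appropriately).

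The translation into the stated conclusion is then a direct computation using the trace formula established just before the proposition, namely $\lbarhs(U)^2=2-2|\tr(U)|$. For $g\in F\setminus\{1\}$ this gives
\[
\dbarhs\big(q(\theta(g)),q(1)\big)^2=\lbarhs(\theta(g))^2=2-2|\tr(\theta(g))|\geq 2-2\big(1-\tfrac12 c(g)^2\big)=c(g)^2,
\]
so that $\dbarhs(q(\theta(g)),1)\geq c(g)$, which is exactly $(F,c,\dbarhs)$--injectivity of $q\circ\theta$. The only real care needed is in stating R\u{a}dulescu's theorem in the correct normalization so that the output trace bound matches the target $c$; I expect this bookkeeping, rather than any genuine difficulty, to be the crux of writing up the ``only if'' direction, since the mathematical weight is carried entirely by the cited result of R\u{a}dulescu. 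The restriction $c<\sqrt2$ is natural here because $\lbarhs$ is bounded above by $\sqrt2$ on $\calU(n)/S^1$, so no cost function exceeding that bound could ever be realized.
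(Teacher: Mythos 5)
Your proposal is correct and matches the paper's treatment: the paper states this proposition without proof, as a reformulation of R\u{a}dulescu's result, relying on exactly the identity $\lbarhs(U)^2=2-2\modulus{\tr(U)}$ that it establishes immediately beforehand, and your ``if'' direction via $\dbarhs\leq\dhs$ is the routine half. The only caveat is that obtaining the trace bound for an \emph{arbitrary} prescribed $c<\sqrt{2}$ requires an amplification (tensor-power) argument on top of R\u{a}dulescu's theorem, which you gesture at but do not carry out --- though the paper itself leaves this equally implicit.
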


\begin{thm}\label{thm:hyperlinear wreath sofic}
	 Let $H$ be a countable, discrete, sofic group and $G$ a countable, discrete, hyperlinear group. Then $G\wr H$ is hyperlinear.
\end{thm}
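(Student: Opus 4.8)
The plan is to feed the unitary-group approximations supplied by the Rădulescu-style Proposition~\ref{P:projunitary} into the general construction of Section~\ref{S:construction}, with $K = \calU(n)$, and then compose the resulting map $\Theta$ into $\bigl(\bigoplus_B \calU(n)\bigr)\wr_B \Sym(B)$ with an explicit homomorphism into a single unitary group. The key point, emphasized in the introduction, is that being far from the identity is \emph{not} stable under tensor products, whereas being far from the scalars \emph{is}; so the correct injectivity hypothesis to carry through is that $q\circ\theta$ is $(E_G,c,\dbarhs)$--injective, where $c(g)$ is some fixed constant (say close to $\sqrt 2$). I would begin by fixing a finite $F\subseteq G\wr H$ and $\varepsilon>0$, extracting $E_G, E, E_H, \varepsilon'$ from Proposition~\ref{prop:metric approx}, choosing $\sigma\colon H\to\Sym(B)$ an $(E_H,\varepsilon')$--sofic approximation, and invoking Proposition~\ref{P:projunitary} to get $\theta\colon G\to\calU(n)$ that is $(E_G,\varepsilon',\dhs)$--multiplicative with $q\circ\theta$ suitably injective.

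Next I would build the appropriate unitary representation of $\bigl(\bigoplus_B\calU(n)\bigr)\wr_B\Sym(B)$. Just as the sofic case used the imprimitive-type action on $\bigoplus_B A \oplus B$, here the natural target is the unitary group of $\bigl(\bigoplus_{\beta\in B}\C^n\bigr)^{\otimes}$-type Hilbert space; concretely, $\bigoplus_B\calU(n)$ acts on $\bigotimes_{\beta\in B}\C^n$ diagonally (tensoring the $\beta$-th factor by $\pi_\beta$), and $\Sym(B)$ permutes the outer copies, so one obtains a homomorphism
\[
\Psi\colon \Bigl(\bigoplus_B\calU(n)\Bigr)\wr_B\Sym(B)\longrightarrow \calU\Bigl(\bigl(\textstyle\bigotimes_{\beta\in B}\C^n\bigr)\otimes \C^{B}\Bigr),
\]
analogous to the $\Psi$ of the sofic proof but with tensor products replacing Cartesian products and Hilbert--Schmidt distance replacing Hamming. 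The multiplicativity of $\Psi\circ\Theta$ then follows from Proposition~\ref{prop:metric approx}(a) once I check that $\Psi$ carries $\tilde d$ (built from a length function $\ell'$ on $\bigoplus_B\calU(n)$ restricting to $\lhs$ on each copy) to $\dhs$ on the big unitary group—this is the exact analogue of equation~\eqref{eq:sofic dhamm}, and should be a direct trace computation using $\tr(A\otimes B)=\tr(A)\tr(B)$.

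The injectivity estimate is where the tensor-product subtlety bites, and I expect this to be \textbf{the main obstacle}. Using Proposition~\ref{prop:metric approx}(b) I get that $\Theta$ is $(F,c',\tilde d_{\max})$--injective with respect to the max-length function $\ell_{\max}$ on $\bigoplus_B\calU(n)$; I then need to translate a lower bound on $\lbarhs(\theta(g_x))$—distance from the scalars—into a lower bound on $\lhs\bigl(\Psi(\Theta(g))\bigr)$—distance from the identity in the big group. For an element $(g,1)$ with $h=1$, the image $\Psi(\Theta(g))$ acts on each tensor factor $\bigotimes_\beta\C^n$ by $\bigotimes_\beta\theta_{b}(g)_\beta$, and I must show that a single factor being far from the scalar circle forces the whole tensor product to be far from $\id$. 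This is precisely the phenomenon flagged in the introduction: I would compute $\modulus{\tr}$ of the tensor product as the product of the per-factor traces, and use $\lbarhs(U)^2 = 2-2\modulus{\tr(U)}$ to see that if even one factor has $\modulus{\tr}$ bounded away from $1$ then $\modulus{\tr}$ of the product is bounded away from $1$, hence $\lhs$ of the product is bounded away from $0$. Assembling this over the $b\in B_E$ that receive a nontrivial factor, and handling the $h\ne 1$ case via the $\ell_{\Hamm}(\sigma(h))$ contribution exactly as in the sofic proof, gives the $(F,c'',\dhs)$--injectivity needed to conclude that $G\wr H$ is $\calC$--approximable for $\calC$ the unitary groups with $\dhs$, i.e.\ hyperlinear. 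The delicate bookkeeping is tracking how the scalar-distance constant degrades through the tensor product and ensuring it stays bounded below uniformly in $\modulus{B}$, which is exactly why Proposition~\ref{P:projunitary} was set up to give distance from $S^1$ rather than from $\id$.
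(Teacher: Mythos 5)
Your proposal is correct and follows essentially the same route as the paper's proof: the same homomorphism $\Psi$ into $\calU\bigl(\bigoplus_B \calH^{\otimes B}\bigr)$ built from the diagonal tensor action, the same use of $\modulus{\tr(\bigotimes_\beta V_\beta)}=\prod_\beta\modulus{\tr(V_\beta)}$ together with $\lbarhs(U)^2=2-2\modulus{\tr(U)}$ to convert distance-from-scalars of a single factor into distance-from-identity of the tensor product, and the same appeal to Propositions~\ref{P:projunitary} and~\ref{prop:metric approx}. The one quantitative point to watch is that, unlike the Hamming case, $\Psi$ is not an isometry from $\tilde d$ to $\dhs$ (it is the \emph{squares} of Hilbert--Schmidt norms that add over the blocks), so multiplicativity only transfers via an estimate of the form $\dhs(\Psi(x),\Psi(y))\leq 2\sqrt{2}\,\tilde d(x,y)^{1/2}$ --- harmless, since $\varepsilon$ is arbitrary.
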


\begin{proof}
We proceed in an analogous manner as for Theorem \ref{thm:sofic}, when we dealt with soficity. 
In particular, we show that $G\wr H$ is $\calC$--approximable, where $\calC$ is as in Definition \ref{def:hyper}. %the class of unitary groups, paired with the normalized Hilbert-Schmidt metrics. 
The necessary maps to demonstrate this will be constructed as a composition, starting with $\Theta$ from Proposition \ref{prop:metric approx} 
 followed by an appropriate embedding 
  into a unitary group.

\medskip

{\it Step 1:} Setting the scene.

Let $F\subseteq G\wr H$ be finite and $\varepsilon>0.$ Let $E_{G},E,E_{H}$ and $\varepsilon'>0$ be as Proposition \ref{prop:metric approx} for $F,\varepsilon.$ Let 
$c\colon G\setminus \{1\}\to (0,1/2]$
be given by $c(g)=\frac{1}{2}$ for $g\in G\setminus \{1\}$ and let 
$c'\colon G\wr H\setminus\{1\}\to (0,1/2]$
be the map constructed in Proposition \ref{prop:metric approx}. 
Since $H$ is sofic we can find an $(E_{H},\varepsilon')$--sofic approximation $\sigma\colon H\to \Sym(B)$ for some finite set $B.$ 
Since $G$ is hyperlinear we apply Proposition \ref{P:projunitary} to find an $(E_{G},\varepsilon,\dhs)$--multiplicative map $\theta\colon G\to \mathcal{U}(\mathcal{H})$ for some finite-dimensional Hilbert space $\mathcal{H}$ so that $q\circ \theta$ is $(E_{G},c,\dbarhs)$--injecitve.

Let $\Theta\colon G\wr H\to\left(\bigoplus_{B}\mathcal{U}(\mathcal{H})\right)\wr_{B}\rtimes \Sym(B)$ be the map constructed from $\theta, \sigma$ and $E$ in Section \ref{S:construction}. Similarly construct $\bar \Theta : G\wr H \to \left(\bigoplus_{B}\mathcal{U}(\mathcal{H})/S^{1}\right)\wr_{B}\Sym(B)$ from $q\circ \theta,\sigma$ and $E$.

Define  
		$$\Phi\colon \bigoplus_{B}\mathcal{U}(\mathcal{H})\to \mathcal{U}(\mathcal{H}^{\otimes B})$$
  by
		$$\Phi \colon (V_{\beta})_{\beta\in B}  \mapsto \bigotimes_{\beta\in B}V_{\beta}, \ \ \textrm{for $(V_\beta)_{\beta \in B} \in \bigoplus_B  \calU(\calH)$.}$$
We now define 
		$$\Psi\colon \left(\bigoplus_{B} \mathcal{U}(\mathcal{H}) \right) \wr_{B} \Sym(B) \to \mathcal{U}\left( \bigoplus_B \left(\mathcal{H}^{\otimes B}\right)\right)$$
 by
		$$\Psi((U_{b})_{b\in B},\tau) \colon (\xi_{b})_{b\in B} \mapsto \left(\Phi (U_{b})\left(\xi_{\tau^{-1}(b)}\right)\right)_{b\in B}$$
for $(\xi_b)_{b\in B} \in \bigoplus_B\left(\mathcal{H}^{\otimes B}\right)$,  $(U_b)_{b \in B} \in \bigoplus_B\bigoplus_B \calU(\calH)$, and $\tau \in \Sym(B)$.
The collection of maps we have is summarized in Figure \ref{fig:map of maps hyp}.

\begin{figure}[h!]
	\begin{tikzcd}[column sep = large, row sep=large]
		\displaystyle G\wr H \arrow{r}{\Theta}\arrow{rd}[swap]{\bar\Theta}
		&\displaystyle \left(\bigoplus_{B} \mathcal{U}(\mathcal{H}) \right) \wr_{B} \Sym(B)  \arrow{d}\arrow{r}{\Psi}
		&\displaystyle \mathcal{U}\left( \bigoplus_B \left(\mathcal{H}^{\otimes B}\right)\right)\\
		&\displaystyle \left( \bigoplus_B \calU(\calH) / S^1 \right) \wr_B \Sym(B)
	\end{tikzcd}
	\caption{A plan of the maps involved.}\label{fig:map of maps hyp}
\end{figure}

Let $\tilde{d},\tilde{d}_{\max}$ be the bi-invariant metrics on $\left(\bigoplus_{B}\mathcal{U}(\mathcal{H})\right)\wr_{B}\Sym(B)$ and $\left(\bigoplus_{B}\mathcal{U}(\mathcal{H})/S^{1}\right)\wr_{B}\Sym(B)$,
respectively,
induced by Proposition \ref{P:conjinvlengthfunc} from the length functions $\ell',\ell_{\max}$
on $\bigoplus_{B}\mathcal{U}(\mathcal{H})$ and $\bigoplus_{B}\mathcal{U}(\mathcal{H})/S^{1}$, respectively, which are given by
\[\ell'(V)=\frac{1}{2}\lhs(\Phi(V)),\]
\[\ell_{\max}(\bar V)=\max_{\beta\in B}\frac{\lbarhs(q (V_{\beta}))}{\sqrt{2}},\]
for $V=(V_\beta)_{\beta\in B} \in \bigoplus_{B}\mathcal{U}(\mathcal{H})$,
and $\bar{V}=(q(V_\beta))_{\beta\in B} \in \bigoplus_{B}\mathcal{U}(\mathcal{H})/S^{1}$. Note that $\dhs$ is bounded by $2,$ whereas $\dbarhs$ is bounded by $\sqrt{2}.$

\medskip

{\it Step 2:} A formula for $\dhs(\Psi(U,\tau),\id)$.

 We aim to bound the $\dhs$--distance from a point in the image of $\Psi$ to the identity in terms of the $\tilde{d}$--distance for its pre-image. 
To this end, we first observe that the matrix representation for $\Psi(U,\tau)$ will be a block permutation matrix, with blocks corresponding to elements of $B$.
The matrix will have a non-zero block in the $(b,b)$--position precisely when $\tau(b)=b$. Thus we get
		$$\tr\left(\Psi(U,\tau)\right) =\frac{1}{\modulus{B}} \sum_{\substack{b\in B\\ \tau(b)=b}} \tr \left(\Phi (U_b)\right),\ \ \textrm{where $U=(U_b)_{b\in B}$.}$$
This implies that 
		$$\norm{\Psi(U,\tau)-\id}^{2} = 2-2\rea(\tr(\Psi(U,\tau))) =
2-\frac{2}{\modulus{B}}\sum_{\substack{b\in B\\\tau(b)=b}}\rea(\tr(U_{b})).$$
By the definition of the Hamming metric we can rewrite the right-hand side as
		$$2\ell_{\Hamm}(\tau)+\frac{2}{\modulus{B}}\sum_{\substack{b\in B\\\tau(b)=b}}1-\rea(\tr(\Phi(U_{b}))).$$
Hence 
		\begin{equation} \label{eq:norm and trace sum}
		\norm{\Psi(U,\tau)-\id}^{2}  = 2\ell_{\Hamm}(\tau)+\frac{2}{\modulus{B}}\sum_{\substack{b\in B\\\tau(b)=b}}\norm{\Phi(U_{b})-\id}^{2}.
		\end{equation}
		
\medskip

{\it Step 3:} Almost multiplicativity.

Since $\norm{\Phi(U_b)-\id} \leq \sqrt{2}$, we can get an upper bound of
		$$\norm{\Psi(U,\tau)-\id}^{2}\leq 2\ell_{\Hamm}(\tau)+\frac{4}{|B|}\sum_{\substack{b\in B\\\tau(b)=b}}\norm{\Phi(U_{b})-\id}\leq 8\tilde{d}((U,\tau),1).$$
In summary, since $\Psi$ is a homomorphism, for $(U_{1},\tau_{1}), (U_{2},\tau_{2})\in \left(\bigoplus_{B}\mathcal{U}(\mathcal{H})\right)\wr_{B}\Sym(B)$ we have shown
		\[\dhs(\Psi(U_{1},\tau_{1}),\Psi(U_{2},\tau_{2}))\leq 2\sqrt{2}\tilde{d}((U_{1},\tau_{1}),(U_{2},\tau_{2}))^{1/2}.\]
From Proposition \ref{prop:metric approx} we know that $\Theta$ is $(F,\varepsilon,\tilde d)$--multiplicative. With this, the above inequality then implies that $\Psi\circ \Theta$ is $(F,2\sqrt{2\varepsilon},\dhs)$--multiplicative.

\medskip 

{\it Step 4:} Almost injectivity.

Let $V=(V_{\beta})_{\beta\in B}\in \bigoplus_{B}\mathcal{U}(\mathcal{H})$. For each $\beta$, we have that 
		\[\modulus{\tr(V_\beta)}=1-\left(\frac{\lbarhs (q(V_\beta))}{\sqrt{2}}\right)^{2}.\]
Thus
		\[\modulus{\tr(\Phi(V))}=\prod_{\beta\in B}\modulus{\tr(V_{\beta})}\leq 1-\max_{\beta\in B}\left(\frac{\lbarhs(q(V_{\beta}))}{\sqrt{2}}\right)^{2} = 1 - \ell_{\max}(\bar V)^2\]
where $\bar V = (q(V_\beta))_{\beta\in B}$. 
 Since $2-2\modulus{\tr(\Phi(V))} \leq \norm{\Phi(V)-\id}^2$, we get $\ell_{\max}(\bar V)^2 \leq \frac{1}{2}\norm{\Phi(V)-\id}^2$.
Inserting this into equation \eqref{eq:norm and trace sum} and arguing as in Section \ref{S:sofic} we see that, if  $\bar U_b = (q(U_{b,\beta}))_{\beta\in B}$, then 
\begin{align*}
\norm{\Psi(U,\tau)-\id}^{2}
&\geq 2\ell_{\Hamm}(\tau)+\frac{2}{|B|}\sum_{\substack{b\in B\\\tau(b)=b}}\ell_{\max}(\bar U_b)^{2}\\
&\geq 2\ell_{\Hamm}(\tau)^{2}+2\left(\frac{1}{|B|}\sum_{\substack{b\in B\\\tau(b)=b}}\ell_{\max}(\bar U_b)\right)^{2}\\
&\geq \left(\ell_{\Hamm}(\tau)+\frac{1}{|B|}\sum_{\substack{b\in B\\\tau(b)=b}}\ell_{\max}(\bar U_b)\right)^{2}\\
&=\tilde{d}_{\max}\big((\bar U,\tau),1\big)^{2}
\end{align*}
 where $\bar U = (\bar U_b)_{b\in B}$.
As  $q\circ\theta$ is $(E_{G},c,\dbarhs)$--injective, it follows by Proposition \ref{prop:metric approx} that $\bar \Theta$ is $(F,c',\tilde d_{\max})$--injective.
Thus, for $(U,\tau)$ in the image of $\Theta$, it follows that $(\bar U, \tau)$ is in the image of $\bar \Theta$, and
\[\lhs(\psi(U,\tau))^2 = \norm{\Psi(U,\tau)-\id}^{2}\geq (c'(x))^2.\]
Thus  $\Psi \circ \Theta$ is $(F,c',\dhs)$--injective and $(F,2\sqrt{\varepsilon},\dhs)$--multiplicative. As $\varepsilon>0$ is arbitrary the proof is complete.
\end{proof}

As with soficity, one can use the improved version of Proposition \ref{prop:metric approx} from Remark \ref{rem:stronger prop} to strengthen the bounds in the above results.
In particular this will show that 
\[\min_{x\in F\setminus\{1\}}\lbarhs(\Psi(\Theta(x)))\geq 1-\varepsilon,\]
provided $\sigma\colon H\to \Sym(B)$ is a sufficiently good sofic approximation and $\theta$ satisfies  \[\min_{g\in E}\lbarhs(\theta(g))>1-\kappa,\]
for a sufficiently large $E$ and a sufficiently small $\kappa.$ 
In this manner, we can directly verify the conclusion of Proposition \ref{P:projunitary} for $G\wr H$ if $H$ is sofic and $G$ is hyperlinear.

\subsection{Proof of Part \eqref{I:linear}: Linear Sofic}

We recall the following definition due to Arzhantseva and Paunescu \cite{LinearSofic}.

\begin{defn} Let $\FF$ be a field. Define a bi-invariant metric $\drk$, with corresponding length function $\lrk$, on $\GL_{n}(\FF)$ by
$$\drk(A,B)=\frac{1}{n}\rank(A-B).$$
We say that a group is \emph{linear sofic over $\FF$} if it is $\mathcal{C}$-approximable, where $\mathcal{C}$ consists of all general linear groups $\GL_n(\FF)$, each paired with the metric $\drk$.
\end{defn}
In this section we use Proposition \ref{prop:metric approx} to show that $G\wr H$ is linear sofic if $G$ is linear sofic and $H$ is sofic. Proving that the map we constructed is sufficiently injective turns out to be trickier than in any of the other cases.
As in the case of hyperlinear groups,  we will need that  our linear sofic approximation $\theta\colon G\to \GL_{n}(\FF)$ does not just satisfy that  $\frac{1}{n}\rank(\theta(g)-\id)$ is bounded away from $0$ for $g\ne 1,$ but in fact we need
\[\min_{\lambda\in \KK^{\times}}\frac{1}{n}\rank_{\KK}(\theta(g)-\lambda \id)> 0,\]
where $\rank_{\KK}$ indicates that we are computing dimension over the algebraic closure $\KK$ of $\FF$.  Thus we use the following definition.

\begin{defn} Let $\FF$ be a field, for $A,B\in \GL_{n}(\FF),$ we let
\[\drkbar(A,B)=\min_{\lambda\in \KK^{\times}}\frac{1}{n}\rank(A-\lambda B)\]
and $\lrkbar$ denote the corresponding length function.
\end{defn}

Note that, since $\frac{1}{n}\rank_{\FF}(A-B)=\frac{1}{n}\rank_{\KK}(A-B)$ for $A,B\in \GL_{n}(\FF),$ we have $\drk(A,B)\geq \drkbar(A,B).$ 

We will then use the following fact, which is a consequence of an equivalent characterization of linear soficity given by Arzhantseva--Paunescu \cite[Theorem 5.10]{LinearSofic}.

\begin{prop}\label{P:projlinearsofic} 
	Let $G$ be a linear sofic group over the field $\FF$ and let $\KK$ denote the algebriac closure of $\FF.$
	
	Then, for any $\delta\in (0,\frac{1}{8})$ and any finite $F\subseteq G,$ there is  a positive integer $n$ and a function $\theta\colon G\to \GL_{n}(\FF)$ 
	which is $(F,\delta,\drk)$--multiplicative, and so that $q\circ \theta$ is $(F,c,\drkbar)$--injective, 
	where $c(g)=\frac{1}{8} -\delta$ for all $g\in G$, and $q \colon \GL_n(\FF) \to \PGL_n(\KK)$ is the canoncial map given by composing the natural inclusion $\GL_{n}(\FF)\to \GL_{n}(\KK)$ with the quotient map $\GL_{n}(\KK)\to \PGL_{n}(\KK).$ 
\end{prop}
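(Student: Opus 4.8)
The plan is to deduce Proposition \ref{P:projlinearsofic} from the cited characterization of linear soficity of Arzhantseva--Paunescu \cite[Theorem 5.10]{LinearSofic}. That characterization, in the form we want, should give us almost-homomorphisms $\theta\colon G\to \GL_n(\FF)$ that are $(F,\delta,\drk)$--multiplicative together with a lower bound on $\frac{1}{n}\rank(\theta(g)-\id)$ for $g\in F\setminus\{1\}$; typically the bound is of the shape $\drk(\theta(g),\id)\geq \frac14$ (up to the usual $\delta$-loss coming from almost-multiplicativity). The issue is that this controls the distance to the identity $\id$ only, whereas for the tensor-product argument in the linear sofic case we instead need the image to be bounded away from \emph{all} scalar matrices, i.e.\ a lower bound on $\drkbar(\theta(g),\id)=\min_{\lambda\in\KK^\times}\frac1n\rank(\theta(g)-\lambda\id)$. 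So the real content is to convert a ``far from $\id$'' statement into a ``far from every scalar'' statement.

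First I would recall the precise quantitative form of \cite[Theorem 5.10]{LinearSofic}, which states (roughly) that linear soficity over $\FF$ is equivalent to the existence, for each finite $F$ and each $\delta>0$, of an $(F,\delta,\drk)$--multiplicative map $\theta\colon G\to\GL_n(\FF)$ with $\drk(\theta(g),\id)\geq \frac14-\delta$ for all $g\in F\setminus\{1\}$ (the constant $\frac14$ being the sharp threshold in their work). The key linear-algebra step is the following observation: for any $A\in\GL_n(\KK)$ and any two distinct scalars $\lambda,\mu\in\KK$, the rank of $A-\lambda\id$ and the rank of $A-\mu\id$ cannot both be small, because their generalized eigenspaces for distinct eigenvalues are independent. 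Concretely, $\ker(A-\lambda\id)\cap\ker(A-\mu\id)=\{0\}$ when $\lambda\neq\mu$, so $\dim\ker(A-\lambda\id)+\dim\ker(A-\mu\id)\leq n$, giving $\rank(A-\lambda\id)+\rank(A-\mu\id)\geq n$. Hence at most one scalar $\lambda$ can satisfy $\frac1n\rank(A-\lambda\id)<\frac12$, and the minimizing $\lambda$ in $\drkbar$ is uniquely pinned down once the distance drops below $\frac12$.

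With this in hand, I would argue as follows. Apply the characterization to obtain $\theta$ with $\drk(\theta(g),\id)\geq \frac14-\delta$. Suppose for contradiction that $\drkbar(\theta(g),\id)$ is small, witnessed by some $\lambda\in\KK^\times$ with $\frac1n\rank(\theta(g)-\lambda\id)<\frac18-\delta$; then by the independence-of-eigenspaces bound any other scalar $\mu\neq\lambda$ (in particular $\mu=1$) has $\frac1n\rank(\theta(g)-\mu\id)\geq 1-(\frac18-\delta)$, which is consistent, so this direction alone does not yet produce a contradiction with the distance to $\id$. The cleaner route is therefore to run the characterization not just at $\id$ but to extract directly the ``far from scalars'' conclusion: since the minimizer in $\drkbar$ is $\GL_n$-conjugation invariant and the scalar ambiguity corresponds exactly to passing to $\PGL_n(\KK)$, I would phrase the lower bound in the quotient $\PGL_n(\KK)$, where $q\circ\theta$ being $(F,c,\drkbar)$--injective with $c=\frac18-\delta$ follows once we know that \emph{some} scalar multiple of $\theta(g)$ is far from $\id$ \emph{and} no scalar multiple is close, the latter being automatic from the eigenspace bound.

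The main obstacle I anticipate is matching the precise constant $\frac18$ and verifying that the $\drkbar$-lower bound survives the $\delta$-error from almost-multiplicativity. The eigenspace-independence inequality gives clean bounds for honest matrices, but here $\theta$ is only almost multiplicative, so one must track how the threshold $\frac14$ for distance-to-$\id$ degrades to $\frac18$ for distance-to-scalars. The factor-of-two loss is exactly what the bound $\rank(A-\lambda\id)+\rank(A-\mu\id)\geq n$ predicts: being $\frac14$-far from $\id$ forces being at least $\frac18$-far from the nearest scalar once one accounts for the worst-case splitting, and the extra $-\delta$ absorbs the multiplicativity error. Carefully bookkeeping these constants, and confirming that the reduction to $\PGL_n(\KK)$ is compatible with the inclusion $\FF\hookrightarrow\KK$ (so that ranks computed over $\KK$ agree with those we can control over $\FF$ via $\drk(A,B)=\frac1n\rank_\FF(A-B)=\frac1n\rank_\KK(A-B)$), is where the real care is needed; the rest is a direct application of \cite[Theorem 5.10]{LinearSofic}.
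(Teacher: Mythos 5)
There is a genuine gap: you never supply the mechanism that actually forces $\theta(g)$ away from the nonidentity scalars, and the substitute you propose cannot work. Your eigenspace-independence inequality $\rank(A-\lambda\id)+\rank(A-\mu\id)\geq n$ for $\lambda\neq\mu$ only shows that \emph{at most one} scalar can be close to $A$; it does not show that \emph{no} scalar is close, and your phrase ``the latter being automatic from the eigenspace bound'' conflates these two statements. Indeed, no bound on $\drk(A,\id)$ of any strength can imply a lower bound on $\drkbar(A,\id)$: take $A=2\id$ (in characteristic $\neq 2$), which satisfies $\drk(A,\id)=1$ but $\drkbar(A,\id)=0$. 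So your heuristic that ``being $\frac14$-far from $\id$ forces being at least $\frac18$-far from the nearest scalar'' is false, and the factor-of-two loss in the constant does not come from the eigenspace splitting.

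The paper's proof closes this gap with a padding construction that your write-up is missing. Starting from the map $\theta_0\colon G\to\GL_m(\FF)$ supplied by \cite[Theorem 5.10]{LinearSofic} (which is $(F,\delta,\drk)$--multiplicative with $\drk(\theta_0(g),\id)\geq\frac14-2\delta$ on $F\setminus\{1\}$), one defines $\theta(g)=\theta_0(g)\oplus\id_m\in\GL_{2m}(\FF)$. For any scalar $\lambda\neq 1$ the lower block of $\theta(g)-\lambda\id$ is $(1-\lambda)\id_m$, which has full rank $m$, so $\drk(\theta(g),\lambda\id)\geq\frac12$; for $\lambda=1$ the distance to $\id$ is exactly halved, giving $\frac12(\frac14-2\delta)=\frac18-\delta$. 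This is where the constant $\frac18$ and the factor of two actually come from: doubling the matrix size, not the worst-case splitting between two eigenvalues. Almost-multiplicativity is preserved since the appended block is an honest homomorphism (constantly $\id$). Without this step, or some equivalent device, the desired $(F,c,\drkbar)$--injectivity does not follow from what \cite[Theorem 5.10]{LinearSofic} provides.
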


\begin{proof} By \cite[Theorem 5.10]{LinearSofic}, it follows that there exists a function
\[\theta_0 \colon G\to \GL_{m}(\FF)\]  for some $m\in \N$, 
which is $(F,\delta,\drk)$--multiplicative and so that $\drk(\theta_0(g)-\id)\geq \frac{1}{4}-2\delta$ for all $g\in F\setminus\{1\}.$ Now consider 
\[\theta\colon G\to \GL_{2m}(\FF)\]
given in matrix block form by
\[
{\theta}(g)=\begin{bmatrix}
\theta_0(g) & 0\\
0 &   \id
\end{bmatrix}.\]
Fix $\lambda\in \KK^{\times}$ and $g\in F\setminus\{1\}.$
If $\lambda\ne 1,$ then we see that $\drk({\theta}(g),\lambda\id)\geq \frac{1}{2}$.
On the other hand, if $\lambda=1$  then 
\[\frac{1}{2m}\rank_{\KK}({\theta}(g)-\lambda \id)=\frac{1}{2}\cdot\left[\frac{1}{m}\rank_{\FF}(\theta_0(g)-\id)\right]\geq \frac{1}{8}-\delta.\]
Thus $\theta$ is the required function.
\end{proof}

In order to use  Proposition \ref{prop:metric approx} to prove that $G\wr H$ is linear sofic, we will need to use tensor products of matrices. The main fact we will need is that if $A\in \GL_{n}(\FF),B\in \GL_{k}(\FF)$ and $\lrkbar(A),\lrkbar(B)$ are both bounded away from zero, then $\lrkbar(A\otimes B)$ is also bounded away from zero. We formulate this precisely in the Proposition \ref{prop:tesnorlowerbound} below, whose proof uses similar ideas to \cite[Lemma 5.4, Prop 5.8]{LinearSofic}.  

Let $J_\alpha(A)$ denote the number of Jordan blocks in the Jordan normal form of $A$ associated to the eigenvalue $\alpha$. If $\alpha$ is not an eigenvalue then we set $J_\alpha(A)=0$.
Given a number $\alpha$ and a positive integer $n$ we let $J(\alpha,n)$ denote the standard $n \times n$ Jordan block with eigenvalue $\alpha$. 
In characteristic zero,  the following is a classic result explaining how Jordan blocks behave under tensor products, known as the Clebsch-Gordan formula (and in fact one can even say  what the precise Jordan block decomposition of $J(\alpha,n)\otimes J(\beta,k)$ is, though we will not need this). See, for example, \cite[Theorem 2]{MatrsinVlass}. For positive characteristic, this result is a consequence of \cite[Theorem 2.2.2]{IliIwa}.

\begin{thm}\label{thm:tensor jordans}
	Let $\KK,$ be an algebraically closed field, $\alpha, \beta$ be nonzero elements of $\KK$, and $n,k$ be positive integers.
	Then 
	$$J_{\alpha\beta}\big(J(\alpha,n) \otimes J(\beta,k)\big) = \min\{n,k\}.$$
\end{thm}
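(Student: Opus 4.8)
The plan is to compute the number of Jordan blocks of $J(\alpha,n)\otimes J(\beta,k)$ associated to the eigenvalue $\alpha\beta$, and show it equals $\min\{n,k\}$. Write $J(\alpha,n)=\alpha\id_n + N_n$ and $J(\beta,k)=\beta\id_k + N_k$, where $N_n,N_k$ are the standard nilpotent shift matrices (with $1$'s on the superdiagonal). Then
\[
J(\alpha,n)\otimes J(\beta,k) = \alpha\beta\,\id_{nk} + M, \qquad M = \alpha\id_n\otimes N_k + N_n\otimes \beta\id_k + N_n\otimes N_k.
\]
The key observation is that $M$ is \emph{nilpotent}: since $N_n,N_k$ are nilpotent and the three summands mutually interact only through nilpotent factors, $M$ is upper-triangular with zero diagonal in the natural lexicographic basis, hence nilpotent. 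The point is that the number of Jordan blocks of $J(\alpha,n)\otimes J(\beta,k)$ for the eigenvalue $\alpha\beta$ equals the number of Jordan blocks of the nilpotent operator $M$ for eigenvalue $0$, which in turn equals $\dim\ker M$ — the geometric multiplicity.

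**Main computation: the kernel of $M$.** So the heart of the proof reduces to showing $\dim\ker M = \min\{n,k\}$. I would work in the tensor basis $\{e_i\otimes f_j\}$ and identify $M$ as the operator $T\otimes \id + \id\otimes S + $ (mixed term), but it is cleaner to factor: observe that $J(\alpha,n)\otimes J(\beta,k) - \alpha\beta\id = (\alpha\id_n+N_n)\otimes(\beta\id_k+N_k) - \alpha\beta\id$. The kernel we want is the set of vectors $v\in \KK^n\otimes\KK^k$ with $(J(\alpha,n)\otimes J(\beta,k))v = \alpha\beta\, v$, i.e. the $\alpha\beta$-eigenspace of the tensor product. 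Since $J(\alpha,n)$ has a one-dimensional $\alpha$-eigenspace $\KK e_1$ and $J(\beta,k)$ has a one-dimensional $\beta$-eigenspace $\KK f_1$, and $\alpha,\beta$ are the unique eigenvalues of the respective factors, the eigenvectors of the tensor product for $\alpha\beta$ must be analyzed via the commuting-operator structure. The cleanest route is: the geometric multiplicity of $\alpha\beta$ for $A\otimes B$ (where $A,B$ have single eigenvalues $\alpha,\beta$) equals $\dim\ker(A\otimes B - \alpha\beta\id)$, and one computes this directly by showing that the recurrence defining kernel elements forces a system whose solution space has dimension exactly $\min\{n,k\}$. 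Concretely, writing $v=\sum_{i,j}c_{ij}\,e_i\otimes f_j$ and expanding $(M v)=0$ componentwise yields a triangular recurrence in the $c_{ij}$, and counting free parameters gives $\min\{n,k\}$.

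**Main obstacle.** The hard part will be the direct dimension count of $\ker M$: unwinding the coupled recurrence coming from the mixed term $N_n\otimes N_k$ and verifying the count is exactly $\min\{n,k\}$ (not merely bounded by it) requires care, particularly because the superdiagonal structure couples neighbouring coefficients along both tensor directions simultaneously. The characteristic-$p$ case is a genuine complication: the naive Clebsch–Gordan decomposition may fail, so I would rely on the cited result \cite[Theorem 2.2.2]{IliIwa} to handle positive characteristic, and in characteristic zero invoke the classical Clebsch–Gordan formula \cite[Theorem 2]{MatrsinVlass} directly. An alternative, characteristic-free approach that sidesteps the explicit block decomposition is to count geometric multiplicities: since the number of Jordan blocks for eigenvalue $\alpha\beta$ equals $\dim\ker(J(\alpha,n)\otimes J(\beta,k)-\alpha\beta\id)$, and since $\alpha\beta$ is the only eigenvalue of the product congruent to a product of eigenvalues of the two single-eigenvalue factors, this kernel dimension is what we need. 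I would therefore organize the argument so that the only substantive step is the kernel-dimension computation, which I expect to resolve by an explicit basis of eigenvectors indexed by an integer running over a range of size $\min\{n,k\}$.
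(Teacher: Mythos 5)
The paper does not actually prove Theorem \ref{thm:tensor jordans}: it quotes it as known, attributing the characteristic-zero case to the classical Clebsch--Gordan formula \cite[Theorem 2]{MatrsinVlass} and the positive-characteristic case to \cite[Theorem 2.2.2]{IliIwa}. Your proposal therefore takes a genuinely different, self-contained route, and its skeleton is sound: writing $M=J(\alpha,n)\otimes J(\beta,k)-\alpha\beta\id$, the matrix $M$ is indeed nilpotent (strictly triangular in the lexicographically ordered tensor basis), so $\alpha\beta$ is the unique eigenvalue of the tensor product and the number of Jordan blocks equals the geometric multiplicity $\dim\ker M$. This reduction is uniform in the characteristic, which is an advantage over the paper's two separate citations.

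However, the decisive step --- showing $\dim\ker M=\min\{n,k\}$ --- is only announced, not carried out, and the route you sketch for it is the delicate part. Expanding $Mv=0$ in coordinates gives $\alpha c_{p,q+1}+\beta c_{p+1,q}+c_{p+1,q+1}=0$; the interior relations determine each row from the previous row together with one seed entry, but the boundary relations ($c_{n,j}=0$ for $j\ge 2$ and $c_{i,k}=0$ for $i\ge 2$) impose constraints on those $n+k-1$ seeds whose rank is not obvious, and a naive count of parameters minus constraints gives the wrong answer, so ``counting free parameters'' needs a genuine argument. A cleaner way to finish: since $B=J(\beta,k)$ is invertible and commutes with $N_k$, factor $M=N_n\otimes B+\alpha\,\id\otimes N_k=(\id\otimes B)\bigl(N_n\otimes\id+\id\otimes C\bigr)$ with $C=\alpha B^{-1}N_k$; as $C=N_k\,p(N_k)$ for a polynomial $p$ with $p(0)=\alpha\beta^{-1}\ne 0$, one has $\rank(C^j)=\rank(N_k^j)$ for all $j$, so $C$ is similar to $N_k$ and hence $\dim\ker M=\dim\ker(N_n\otimes\id+\id\otimes N_k)$. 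For the latter the relation collapses to $c_{p,q+1}+c_{p+1,q}=0$, so kernel vectors are constant up to alternating sign along each antidiagonal $i+j=s$, and the antidiagonal $s$ survives the boundary constraints exactly when $2\le s\le\min\{n,k\}+1$, giving dimension $\min\{n,k\}$. With that computation written out, your argument is complete and, unlike the paper's, characteristic-free.
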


We will use this to prove the following.

\begin{lem}\label{lem:counting jordan blocks in tensor products}
	Let $\KK$ be an algebraically closed field and take $A \in \GL_n(\KK)$ and  $B \in \GL_k(\KK)$.
		
	Then, for each $\lambda \in \KK$, 
		$$J_{\lambda}(A\otimes B) \leq \min\left\{ k \max\limits_{\alpha \in \KK} J_\alpha(A) , n \max\limits_{\beta \in \KK} J_\beta(B) \right\}.$$
\end{lem}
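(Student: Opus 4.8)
The plan is to bound the number of Jordan blocks of $A\otimes B$ with eigenvalue $\lambda$ by controlling, for each way of writing $\lambda=\alpha\beta$, how many blocks of $A\otimes B$ with eigenvalue $\lambda$ can arise from pairing a block of $A$ with eigenvalue $\alpha$ against a block of $B$ with eigenvalue $\beta$. By symmetry of the two factors in the tensor product, it suffices to prove one of the two bounds, say $J_\lambda(A\otimes B)\leq k\,\max_\alpha J_\alpha(A)$, and then apply the same argument with the roles of $A$ and $B$ swapped to get the other.

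First I would reduce to Jordan normal form. Writing $A$ and $B$ in Jordan form, we have $A\otimes B$ similar to a direct sum $\bigoplus_{i,j} J(\alpha_i,n_i)\otimes J(\beta_j,k_j)$, where the $(\alpha_i,n_i)$ index the Jordan blocks of $A$ and the $(\beta_j,k_j)$ those of $B$. Since the number of Jordan blocks with a given eigenvalue is additive over direct sums, we get
\[ J_\lambda(A\otimes B)=\sum_{i,j} J_\lambda\big(J(\alpha_i,n_i)\otimes J(\beta_j,k_j)\big). \]
By Theorem \ref{thm:tensor jordans}, the summand $J_\lambda\big(J(\alpha_i,n_i)\otimes J(\beta_j,k_j)\big)$ is nonzero only when $\alpha_i\beta_j=\lambda$, in which case it equals $\min\{n_i,k_j\}\leq k_j$. (Here the hypothesis $A,B\in\GL$, so all eigenvalues are nonzero, is what lets us apply the theorem.) Hence
\[ J_\lambda(A\otimes B)\leq \sum_{\substack{i,j\\ \alpha_i\beta_j=\lambda}} k_j. \]

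The key combinatorial step is then to reorganize this sum by grouping over the blocks of $B$. For each fixed $j$, the constraint $\alpha_i\beta_j=\lambda$ pins down the eigenvalue $\alpha_i=\lambda\beta_j^{-1}$ that the block of $A$ must carry; the number of blocks $i$ of $A$ meeting this constraint is exactly $J_{\lambda\beta_j^{-1}}(A)$, which is at most $\max_\alpha J_\alpha(A)$. Therefore
\[ J_\lambda(A\otimes B)\leq \sum_j k_j\, J_{\lambda\beta_j^{-1}}(A)\leq \Big(\max_\alpha J_\alpha(A)\Big)\sum_j k_j = k\,\max_\alpha J_\alpha(A), \]
using that $\sum_j k_j=k$ since the block sizes of $B$ sum to its dimension. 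Swapping the roles of $A$ and $B$ yields the bound $n\,\max_\beta J_\beta(B)$, and taking the minimum of the two completes the proof.

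The main obstacle, such as it is, is bookkeeping rather than conceptual: one must be careful that the reduction to Jordan form is harmless (similarity preserves $J_\lambda$, and $A\otimes B$ is similar to $(PAP^{-1})\otimes(QBQ^{-1})$ via $P\otimes Q$), and that Theorem \ref{thm:tensor jordans} is applied only to blocks with nonzero eigenvalues, which is guaranteed by invertibility. The only genuine input is the Clebsch--Gordan count in Theorem \ref{thm:tensor jordans}; everything else is additivity of block counts over direct sums together with the elementary observation that summing block sizes of $B$ recovers $k$.
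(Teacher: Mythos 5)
Your proof is correct and follows essentially the same route as the paper's: reduce to Jordan form, apply Theorem \ref{thm:tensor jordans} blockwise, and regroup the resulting sum over the blocks of $B$ so that $\sum_j k_j = k$ and the count of matching $A$-blocks is bounded by $\max_\alpha J_\alpha(A)$. The only cosmetic difference is that the paper first groups blocks by eigenvalue (writing $A=\bigoplus_\alpha A_\alpha$, $B=\bigoplus_\beta B_\beta$) before summing, whereas you sum directly over all pairs of individual Jordan blocks; the underlying bookkeeping is identical.
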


\begin{proof}
	Let us first prove that $J_{\lambda}(A\otimes B)\leq k\max_{\alpha\in \KK}J_{\alpha}(A),$ as the other inequality will follow by symmetry. First, assuming that $A$ and $B$ have unique eigenvalues $\alpha$ and $\beta$ respectively,
	the result of the lemma becomes
		\begin{equation}\label{eq:jordan block claim}
			J_{\alpha\beta}(A\otimes B) \leq kJ_{\alpha}(A).
		\end{equation}
Both sides of the above inequality are additive under taking direct sums of matrices with the given eigenvalues. So we may assume that $A$ and $B$ are one Jordan block, in which case  \eqref{eq:jordan block claim} follows from Theorem \ref{thm:tensor jordans}.
	
	Now suppose the eigenvalues of $A$ and $B$ are not necessarily unique.
	Since $\KK$ is algebraically closed, up to conjugacy we may write $A$ and $B$ as direct sums
			$$A = \bigoplus_{\alpha \in \FF} A_\alpha,\ \ B = \bigoplus_{\beta\in \KK} B_\beta$$
	where $A_\alpha$ is the direct sum of all Jordan blocks of $A$ associated to eigenvalue $\alpha$, and similarly for $B_\beta$.
	Suppose $A_\alpha$ is $n_\alpha \times n_\alpha$ and $B_\beta$ is $k_\beta \times k_\beta$.
	Then
		$$A\otimes B = \bigoplus_{\alpha,\beta\in \KK} A_\alpha \otimes B_\beta,$$
	which leads to the following, using \eqref{eq:jordan block claim}:
		$$J_\lambda(A\otimes B) 
		= \sum\limits_{\alpha\beta=\lambda} J_\lambda(A_\alpha \otimes B_\beta) 
		\leq \sum\limits_{\alpha\beta=\lambda} k_\beta  J_\alpha(A)
		\leq \sum\limits_{\beta\in \KK} k_\beta \max\limits_{\alpha \in \KK} J_\alpha(A) 
		= k \max\limits_{\alpha \in \KK} J_\alpha(A).$$
This completes the proof.
\end{proof}

To see how the normalized rank metric $\drkbar$ behaves under tensor products, we remark that  $\rank(A-\alpha\id) = n - J_\alpha(A)$ for every $\alpha\in \KK$, implying 
$\drkbar (A,\id) = \inf_{\lambda\in \KK} \left( 1- \frac{1}{n}J_\lambda(A) \right)$.
The following is thus an immediate consequence of this fact, and of Lemma \ref{lem:counting jordan blocks in tensor products}.

\begin{prop}\label{prop:tesnorlowerbound} Let $\FF$ be a field. Let $n,k\in \N$ and $A\in \GL_{n}(\FF),B\in \GL_{k}(\FF).$ Then
\[\drkbar(A\otimes B,\id)\geq \max\left\{ \drkbar(A,\id),\drkbar(B,\id) \right\}.\]
\end{prop}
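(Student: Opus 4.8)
The plan is to derive Proposition~\ref{prop:tesnorlowerbound} directly from Lemma~\ref{lem:counting jordan blocks in tensor products} together with the identity relating $\drkbar$ to Jordan block counts that is recorded in the sentence preceding the proposition. First I would record the key reformulation: since $\rank(A-\alpha\id)=n-J_\alpha(A)$ over the algebraically closed field $\KK$, for any $C\in\GL_m(\KK)$ we have
\[
\drkbar(C,\id)=\inf_{\lambda\in\KK^\times}\frac{1}{m}\rank(C-\lambda\id)=1-\frac{1}{m}\max_{\lambda\in\KK^\times}J_\lambda(C).
\]
Thus $\drkbar(C,\id)$ is small exactly when $C$ has a large Jordan block count at some eigenvalue, and bounding $\drkbar$ from below amounts to bounding $\max_\lambda J_\lambda(C)$ from above.

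Next I would apply this to $C=A\otimes B$, which is an element of $\GL_{nk}(\KK)$. Using the reformulation and then Lemma~\ref{lem:counting jordan blocks in tensor products}, I get
\[
\drkbar(A\otimes B,\id)=1-\frac{1}{nk}\max_{\lambda\in\KK^\times}J_\lambda(A\otimes B)
\geq 1-\frac{1}{nk}\min\left\{k\max_{\alpha}J_\alpha(A),\,n\max_{\beta}J_\beta(B)\right\}.
\]
Taking the minimum apart, the first term gives $1-\frac{1}{n}\max_\alpha J_\alpha(A)=\drkbar(A,\id)$ and the second gives $1-\frac{1}{k}\max_\beta J_\beta(B)=\drkbar(B,\id)$. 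Since $1-\frac{1}{nk}\min\{X,Y\}=\max\{1-\frac{X}{nk},1-\frac{Y}{nk}\}$, the bound collapses exactly to $\max\{\drkbar(A,\id),\drkbar(B,\id)\}$, which is the claim.

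One technical point I would check carefully is that the infima/maxima are taken over $\KK^\times$ rather than all of $\KK$: because $A,B\in\GL$ their eigenvalues are nonzero, so every relevant $J_\lambda$ with $\lambda\neq0$ is accounted for, and the $\lambda=0$ term contributes nothing since $A-0\cdot\id=A$ is invertible (equivalently $J_0=0$). I would also note the dimension bookkeeping: $A\otimes B$ acts on a space of dimension $nk$, which is why the normalization is $\frac{1}{nk}$ and why the two factors $\frac{k}{nk}=\frac1n$ and $\frac{n}{nk}=\frac1k$ come out correctly matched to $\drkbar(A,\id)$ and $\drkbar(B,\id)$.

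I do not expect a serious obstacle here, since all the substantive work has been front-loaded into Lemma~\ref{lem:counting jordan blocks in tensor products} (and ultimately into the Clebsch--Gordan-type Theorem~\ref{thm:tensor jordans}); the proposition is essentially an arithmetic repackaging. If anything, the only place demanding care is making sure the algebraic manipulation $1-\frac{1}{nk}\min\{X,Y\}=\max\{1-\frac{X}{nk},1-\frac{Y}{nk}\}$ is presented cleanly, and confirming that the definition $\drkbar(A,\id)=\min_{\lambda}\frac1n\rank(A-\lambda\id)$ genuinely matches the Jordan-block formula via $\rank(A-\lambda\id)=n-J_\lambda(A)$. Given these are immediate, the proof is short and the statement follows as an \emph{immediate consequence}, exactly as the surrounding text advertises.
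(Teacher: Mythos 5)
Your proof is correct and follows exactly the route the paper takes: the paper likewise observes that $\rank(A-\alpha\id)=n-J_\alpha(A)$ gives $\drkbar(A,\id)=1-\frac{1}{n}\max_\lambda J_\lambda(A)$ and then declares the proposition an immediate consequence of Lemma~\ref{lem:counting jordan blocks in tensor products}; you have simply written out the arithmetic the paper leaves implicit. No gaps.
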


\begin{thm}
	Let $G$ be a linear sofic group over the field $\FF$ and $H$ be a sofic group. Then $G\wr H$ is linear sofic over $\FF.$
\end{thm}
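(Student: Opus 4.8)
The plan is to mimic the structure of the proofs of Theorems \ref{thm:sofic} and \ref{thm:hyperlinear wreath sofic}, showing that $G\wr H$ is $\calC$--approximable for $\calC$ the class of general linear groups with the rank metric. First I would set the scene exactly as in the hyperlinear case: given finite $F\subseteq G\wr H$ and $\varepsilon>0$, extract $E_G,E,E_H$ and $\varepsilon'$ from Proposition \ref{prop:metric approx}. Using soficity of $H$, fix an $(E_H,\varepsilon')$--sofic approximation $\sigma\colon H\to\Sym(B)$. Using linear soficity of $G$, invoke Proposition \ref{P:projlinearsofic} (with a suitably small $\delta$, say $\delta<\varepsilon'$) to obtain $\theta\colon G\to\GL_n(\FF)$ that is $(E_G,\delta,\drk)$--multiplicative and such that $q\circ\theta$ is $(E_G,c,\drkbar)$--injective, where $c$ is the constant $\tfrac18-\delta$. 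Running the Section \ref{S:construction} construction then produces $\Theta\colon G\wr H\to\bigl(\bigoplus_B\GL_n(\FF)\bigr)\wr_B\Sym(B)$, and in parallel a projective version $\bar\Theta$ built from $q\circ\theta$.

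Next I would construct the embedding into a single general linear group. The tensor-product map $\Phi\colon\bigoplus_B\GL_n(\FF)\to\GL\bigl((\FF^n)^{\otimes B}\bigr)$ sends $(V_\beta)_{\beta\in B}\mapsto\bigotimes_{\beta\in B}V_\beta$, and as in the hyperlinear proof this induces a homomorphism
\[\Psi\colon\Bigl(\bigoplus_B\GL_n(\FF)\Bigr)\wr_B\Sym(B)\to\GL\Bigl(\bigoplus_B(\FF^n)^{\otimes B}\Bigr)\]
by permuting the outer $B$--summands and acting by $\Phi(U_b)$. The crucial step is to relate $\drk(\Psi(U,\tau),\id)$ to the wreath-product length functions $\tilde\ell$ and $\tilde\ell_{\max}$. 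Since $\Psi(U,\tau)$ is a block-permutation matrix, its rank defect is concentrated on the fixed blocks $\tau(b)=b$: one computes
\[\tfrac{1}{|B|\,n^{|B|}}\rank\bigl(\Psi(U,\tau)-\id\bigr)=\ell_{\Hamm}(\tau)+\frac{1}{|B|}\sum_{\substack{b\in B\\\tau(b)=b}}\frac{1}{n^{|B|}}\rank\bigl(\Phi(U_b)-\id\bigr),\]
which identifies $\drk(\Psi(U,\tau),\id)$ with $\tilde\ell$ for the choice $\ell'(V)=\tfrac{1}{n^{|B|}}\rank(\Phi(V)-\id)$. Almost multiplicativity then follows immediately from the $(F,\varepsilon,\tilde d)$--multiplicativity of $\Theta$ in Proposition \ref{prop:metric approx}, since $\Psi$ is an honest homomorphism and $\drk$ transfers exactly.

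The hard part will be the injectivity estimate, because—as the authors emphasise in the introduction—the rank-distance-from-identity is destroyed by tensoring, and one must work projectively. Here I would define $\ell_{\max}$ on $\bigoplus_B\GL_n(\FF)/({\rm scalars})$ by $\ell_{\max}(\bar V)=\max_{\beta\in B}\lrkbar(V_\beta)$ and bound, for each fixed block $b$ with $\tau(b)=b$,
\[\frac{1}{n^{|B|}}\rank\bigl(\Phi(U_b)-\id\bigr)\ \geq\ \drkbar\bigl(\textstyle\bigotimes_\beta U_{b,\beta},\,\id\bigr)\ \geq\ \max_{\beta\in B}\drkbar(U_{b,\beta},\id)=\ell_{\max}(\bar U_b),\]
where the first inequality is $\drk\geq\drkbar$ and the second is precisely Proposition \ref{prop:tesnorlowerbound} applied iteratively across $\beta\in B$. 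Summing over fixed blocks and adding $\ell_{\Hamm}(\tau)$ gives $\drk(\Psi(U,\tau),\id)\geq\tilde\ell_{\max}(\bar U,\tau)$. Since $q\circ\theta$ is $(E_G,c,\drkbar)$--injective, Proposition \ref{prop:metric approx}(b) tells us $\bar\Theta$ is $(F,c',\tilde d_{\max})$--injective with $c'$ bounded below by a positive constant (roughly $\tfrac14(\tfrac18-\delta)$), so for every $x\in F\setminus\{1\}$ we obtain $\drk(\Psi(\Theta(x)),\id)\geq c'(x)>0$. This establishes that $\Psi\circ\Theta$ is both almost multiplicative and almost injective into a general linear group over $\FF$, proving $G\wr H$ is linear sofic over $\FF$. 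The one subtlety to handle carefully is that $\Phi$ lands in $\GL$ over $\FF$ while $\drkbar$ and Proposition \ref{prop:tesnorlowerbound} are computed over the algebraic closure $\KK$; since rank is insensitive to field extension for matrices over $\FF$, and the Jordan-block counting of Lemma \ref{lem:counting jordan blocks in tensor products} takes place over $\KK$, the two viewpoints match and the argument goes through.
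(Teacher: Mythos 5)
Your overall route is the same as the paper's: the same embedding $\Psi$ built from the tensor-product map $\Phi$, the same use of Proposition \ref{P:projlinearsofic} to obtain projective injectivity, Proposition \ref{prop:tesnorlowerbound} applied iteratively to control $\lrkbar$ of tensor products, and Proposition \ref{prop:metric approx}(b) applied to $\bar\Theta$. The one genuine error is your Step 2 formula for $\rank(\Psi(U,\tau)-\id)$. You treat the blocks with $\tau(b)\ne b$ as contributing full rank, but they do not: for each cycle $(b_1\cdots b_k)$ of $\tau$ with $k\ge 2$, the tuples $(\xi_b)$ supported on that cycle and satisfying $\Phi(U_{\tau(b)})\xi_b=\xi_{\tau(b)}$ form a subspace of $\ker(\Psi(U,\tau)-\id)$ of dimension up to $n^{\modulus{B}}$ (for a transposition with all $U_{b,\beta}=\id$ this is the diagonal subspace). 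The correct identity therefore carries an extra term $-\cyc_0(\tau)/\modulus{B}$, where $\cyc_0(\tau)$ counts the cycles of $\tau$ of length at least two. Consequently your claimed injectivity bound $\drk(\Psi(U,\tau),\id)\ge\tilde\ell_{\max}((\bar U,\tau))$ is false as stated: for the transposition just described the left side is $\tfrac12$ while the right side is $1$.

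The gap is repairable, and the repair is precisely the step you are missing: since every cycle of length at least two moves at least two points, $\ell_{\Hamm}(\tau)\ge 2\cyc_0(\tau)/\modulus{B}$, so
\[
\lrk(\Psi(U,\tau))\;\ge\;\tfrac12\,\ell_{\Hamm}(\tau)+\frac{1}{\modulus{B}}\sum_{\substack{b\in B\\ \tau(b)=b}}\lrk(\Phi(U_b))\;\ge\;\tfrac12\,\tilde\ell_{\max}((\bar U,\tau)),
\]
which yields $(F,\tfrac{c'}{2},\drk)$--injectivity of $\Psi\circ\Theta$ --- still bounded away from zero, so the theorem follows. Your multiplicativity step is unaffected, because there you only need the inequality $\lrk(\Psi(U,\tau))\le\tilde\ell((U,\tau))$, which holds since the kernel is at least as large as the direct sum over the fixed blocks; the exact equality you wrote is not needed there.
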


\begin{proof}
	The structure of the proof is analogous to that of Theorem \ref{thm:hyperlinear wreath sofic}. We compose the map $\Theta : G\wr H \to \left(\bigoplus_{B}\GL_{n}(\FF)\right)\wr_{B}\Sym(B)$ from Proposition \ref{prop:metric approx} with a map $\Psi$ giving us a map from $G \wr H$ to a linear group. We verify that $\Psi \circ \Theta$ satisfies the required almost multiplicativity and almost injectivity conditions.
	
\medskip

{\it Step 1:} Setting the scene.

Recall that $q\colon \GL_{n}(\FF)\to \PGL_{n}(\KK)$ denotes the composition of the canonical inclusion $\GL_{n}(\FF)\to \GL_{n}(\KK)$, where $\KK$ is the algebraic closure of $\FF$, with the quotient map $\GL_{n}(\KK)\to \PGL_{n}(\KK).$ 

Take a finite subset $F$ of $G\wr H$ and $\varepsilon>0.$
Define $c\colon G\setminus\{1\}\to (0,\infty)$ to take the value $\frac{1}{16}$ for all $g \neq 1$.
Let $E_{G}\subseteq G,E,E_{H}\subseteq H$, $c' : G\setminus \{1\} \to (0,\infty)$,  and $\varepsilon'>0$ all be as determined by $F,\varepsilon,$ and $c$ in Proposition \ref{prop:metric approx}.
Note that  from \eqref{eq:epsilon' bounds} in the proof of Proposition \ref{prop:metric approx}, we know that $\varepsilon' < \frac{1}{16^2}<\frac{1}{16}$.
Thus, taking $\delta=\varepsilon'$ in Proposition \ref{P:projlinearsofic} gives us a map $\theta \colon G\to \GL_{n}(\FF)$ that is $(E_{G},\varepsilon',\drk)$--multiplicative and is such that $q\circ \theta$ is  $(E_{G},c,\drkbar)$--injective.

Let $\sigma\colon H\to \Sym(B),$ for some finite set $B,$ be an $(E_{H},\varepsilon')$--sofic approximation and take 
$$\Theta\colon G\wr H\to \left(\bigoplus_{B}\GL_{n}(\FF)\right)\wr_{B}\Sym(B)$$
to be the map constructed from $\theta,\sigma$, and $E$ in Section \ref{S:construction}. 
Meanwhile, let 
$$\bar{\Theta}  \colon G\wr H \to \left(\bigoplus_{B}\PGL_{n}\left(\KK\right)\right)\wr_{B}\Sym(B)$$
be the map constructed using $q\circ\theta$ in place of $\theta$.

We now describe how to embed the image of $\Theta$ into a linear group. First define
\[\Phi\colon \bigoplus_{B}\GL_{n}(\FF)\to \GL\left((\FF^{n})^{\otimes B}\right)\]
by
\[\Phi \colon (X_{\beta})_{\beta\in B} \mapsto \bigotimes_{\beta\in B}X_{\beta},\ \  \textrm{for $(X_\beta)_{\beta\in B} \in \bigoplus_B \GL_n(\FF)$.}\]
Using $\Phi$, we define 
	$$\Psi\colon \left(\bigoplus_{B}\GL_{n}(\FF)\right)\wr_{B}\Sym(B)\to \GL\left({\bigoplus_B}\left(\left(\FF^{n}\right)^{\otimes B}\right)\right)$$
by
	$$\Psi((A_{b})_{b\in B},\tau) \colon (\xi_{b})_{b\in B} \mapsto \left(\Phi(A_{b})\left(\xi_{\tau^{-1}(b)}\right)\right)_{b\in B}$$
for $(\xi_b)_{b \in B} \in {\bigoplus_B}\left(\left(\FF^{n}\right)^{\otimes B}\right)$, $(A_b)_{b\in B} \in \bigoplus_B\bigoplus_B \GL_n(\FF)$, and $\tau \in \Sym(B)$.

The collection of maps we have is summarized in Figure \ref{fig:map of maps}.

\begin{figure}[h!]
	\begin{tikzcd}[column sep = large, row sep=large]
			\displaystyle G\wr H \arrow{r}{\Theta}\arrow{rd}[swap]{\bar\Theta}
			&\displaystyle \left(\bigoplus_B\GL_n(\FF)\right)\wr_B \Sym(B) \arrow{d}\arrow{r}{\Psi}
			&\displaystyle \GL\left({\bigoplus_B}\left(\left(\FF^{n}\right)^{\otimes B}\right)\right)\\
			&\displaystyle \left( \bigoplus_B \PGL_n\left(\KK\right)\right) \wr_B \Sym(B)
	\end{tikzcd}
	\caption{A plan of the maps involved.}\label{fig:map of maps}
\end{figure}

Let $\widetilde{d},\tilde d_{\max}$ be the bi-invariant metrics on the wreath products
$\left(\bigoplus_{B}\GL_n(\FF)\right)\wr_{B}\Sym(B),$ and
$\left(\bigoplus_{B}\PGL_n(\KK)\right)\wr_{B}\Sym(B)$, 
respectively, obtained by applying Proposition \ref{P:conjinvlengthfunc} to the length functions
$\ell',\ell_{\max}$  on $\bigoplus_{B}\GL_n(\FF),$ and $\bigoplus_{B}\PGL_n(\KK)$, respectively,
given by
\[\ell'(X)=\lrk(\Phi(X)),\]
\[\ell_{\max}(\bar X)=\max_{\beta\in B}\lrkbar(\Phi((X_{\beta})_{\beta\in B})),\]
where $X = (X_\beta)_{\beta \in B}\in \bigoplus_B\GL_n(\FF)$, and $\bar X = (q(X_\beta))_{\beta\in B} \in \bigoplus_B \PGL_n(\KK)$.

\medskip

{\it Step 2:} 
A formula for $\lrk(\Psi(A,\tau))$.

We wish to show that $\Psi\circ\Theta$ is almost multiplicative and almost injective. To do this we need a good handle on $\lrk(\Psi(A,\tau))$ when $(A,\tau)$ is in the image of $\Theta$.

Write $A=(A_{b})_{b\in B}$ with $A_{b}\in \bigoplus_{B}\GL_{n }(\FF).$
The kernel of $\Psi(A,\tau)-\id$ is given by
\[
\left\{(\xi_b)_{b\in B}\in \bigoplus_{\substack{b\in B\\\tau(b)\neq b}} \left(\FF^n\right)^{\otimes B} 
\colon 
\Phi(A_{\tau(b)})(\xi_{b})=\xi_{\tau(b)}\right\}
\oplus
\left(\bigoplus_{\substack{b\in B\\\tau(b)=b}}\ker(\Phi(A_{b})-\id)\right)
.\]
Focusing on the left term in the above direct sum, if we pick a cycle $(b_1\ b_2 \cdots b_k)$ of $\tau$, with $k\geq 2$, then $\xi_{b_1}$ determines $\xi_{b_i}$ for $i=2,\ldots,k$.
Thus each cycle of length greater than 1 contributes exactly $n^{\modulus{B}}$ to the dimension of the kernel.
Let  $\cyc_0(\tau)$ be the number of cycles of length at least two in the cycle decomposition of $\tau$. 
From the above discussion we see that the dimension of $\ker(\Psi(A,\tau)-\id)$ is
\[
n^{\modulus{B}}  \cyc_0(\tau)
+
\sum_{\substack{b\in B\\\tau(b)= b}}\dim(\ker(\Phi(A_{b})-\id))
.\]
It follows that
\begin{eqnarray*}
	\lrk(\Psi(A,\tau))
	&=&
	1-\frac{\dim(\ker(\Psi(A,\tau)-\id)}{n^{\modulus{B}}\modulus{B}}\\
	&=&
	1 - \frac{\cyc_0(\tau)}{\modulus{B}} 
	- \sum_{\substack{b\in B\\\tau(b)= b}}\frac{1 - \lrk(\Phi(A_b))}{\modulus{B}}.
\end{eqnarray*}
Since
\[\ell_{\Hamm}(\tau) = 1- \frac{\modulus{\{b \in B:\tau(b)=b\}}}{\modulus{B}}\]
we get
\begin{equation}\label{E:rankcomputation}
\lrk(\Psi(A,\tau))=
	\ell_{\Hamm}(\tau) - \frac{\cyc_0(\tau)}{\modulus{B}} 
	+ \frac{1}{\modulus{B}} \sum_{\substack{b\in B\\\tau(b)= b}} \lrk(\Phi(A_b)).
\end{equation}

\medskip
{\it Step 3:} Almost multiplicativity.

Equation \eqref{E:rankcomputation} implies that 
\[\lrk(\Psi(A,\tau),)\leq \tilde{\ell}((A,\tau)).\]
Bi-invariance implies that for $(A_{1},\tau_{1}),(A_{2},\tau_{2})\in \left(\bigoplus_{B}\GL_{n}(\FF)\right)\wr_{B}\Sym(B)$ we have:
\[\drk(\Psi(A_{1},\tau_{1}),(A_{2},\tau_{2}))\leq \tilde{d}((A_{1},\tau_{1}),(A_{2},\tau_{2})).\]
Thus $(F,\varepsilon,\drk)$--multiplicativity of $\Psi\circ \Theta$ follows from the $(F,\varepsilon,\tilde d)$--multiplicativity  of $\Theta$. 

\medskip
{\it Step 4:} Almost injectivity.

While for almost multiplicativity we used the almost multiplicativity of $\Theta$, for almost injectivity we will use the almost injectivity of $\bar{\Theta}$.

 Elementary calculations yield
	$$\ell_{\Hamm}(\tau)
	 = 
	\frac{\modulus{B}- \modulus{b\in B \colon \tau(b)=b}}{\modulus{B}}
	\geq 
	\frac{2 \cyc_0(\tau)}{\modulus{B}}.
	 $$
Using this in \eqref{E:rankcomputation}, we get that
\[\lrk(\Psi(A,\tau))\geq \frac{1}{2}\ell_{\Hamm}(\tau)+\frac{1}{\modulus{B}}\sum_{\substack{b\in B\\\tau(b)=b}}\lrk(\Phi(A_{b})).\]
By repeated applications of Proposition \ref{prop:tesnorlowerbound} we have, for each $b\in B,$
\[\lrk(\Phi(A_{b}))\geq \max_{\beta\in B}\lrkbar(A_{b,\beta}).\]
This implies that
\[\lrk(\Psi(A,\tau))\geq \frac{1}{2} \tilde \ell_{\max}((\bar A,\tau)).\]
where $\bar A = ((q(A_{b,\beta})_{\beta\in B})_{b\in B}$.
If $(A,\tau)$ lies in the image of $\Theta$ then $(\bar A,\tau)$ lies in the image of $\bar \Theta$. Then, $(F,c',\tilde d_{\max})$--injectivity of $\bar \theta$, coupled with the above inequality, gives us $(F,\frac{c'}{2},\drk)$--injectivity of $\Psi\circ\Theta$.
\end{proof}

\bibliographystyle{alpha}
\bibliography{bibliography_sofic}

\vspace{8mm}
\begin{minipage}{0.45\textwidth}{\small 
		\begin{flushleft}
			Ben Hayes\\
			University of Virginia\\
			Charlottesville, VA 22904, USA\\
			\emph{e-mail:} \texttt{brh5c@virginia.edu}\\[8mm]
	\end{flushleft}}
\end{minipage}\hspace{0.1\textwidth}
\begin{minipage}{0.45\textwidth}{\small
		\begin{flushleft}
			Andrew Sale\\
			Cornell University\\
			Ithaca, NY 14853, USA\\
			\emph{e-mail:} \texttt{andrew.sale@some.oxon.org}\\[8mm]
	\end{flushleft}}
\end{minipage}

\end{document}